\documentclass[11pt]{article}

\usepackage{amsmath}

\usepackage[ansinew]{inputenc}
\usepackage{amssymb}
\usepackage{amsthm}
\usepackage{amsfonts}
\usepackage{amscd}
\usepackage{latexsym, amsfonts}
\usepackage{graphicx}

\newtheorem{theorem}{Theorem}
\newtheorem{lemma}{Lemma}

\newtheorem{proposition}{Proposition}

\newtheorem{corollary}{Corollary}

\newcommand{\EDS}{\includegraphics{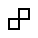}}
\newcommand{\EDB}{\includegraphics{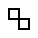}}
\newcommand{\EH}{\includegraphics{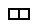}}
\newcommand{\EV}{\includegraphics{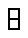}}


\newcommand{\be}{\begin{equation}}
\newcommand{\ee}{\end{equation}}
\newcommand{\rem}[1]{}

\def\A{\mathcal{A}}
\def\W{\mathcal{W}}
\def\B{\mathcal{B}}

\def\G{\mathcal{G}}
\def\S{\mathcal{S}}
\def\v{\mathcal{V}}
\def\h{\mathcal{H}}
\def\done{\mathcal{D}_1}
\def\dtwo{\mathcal{D}_2}
\def\cone{\mathcal{C}_1}
\def\ctwo{\mathcal{C}_2}
\def\linf{L_{\infty}}
\def\Gs{\mathcal{G}^{s}}
\def\t{\mathcal{T}}
\numberwithin{equation}{section}


\title{On totally disconnected generalised Sierpi\'nski carpets}

\author{Ligia L. Cristea $^*$ \\Institute of Mathematics\\Technical University of Graz\\Steyrergasse 30\\8010 Graz\\Austria\\ \tt{strublistea@gmail.com} \and Bertran Steinsky \thanks{This research was supported by the Austrian Science Fund (FWF), Project P20412-N18, at the Technical University of Graz, Institute of Mathematics A.} \\F\"urbergstr. 56,\\5020 Salzburg\\Austria\\ \tt{steinsky@finanz.math.tu-graz.ac.at}}





\begin{document}
\maketitle

\begin{abstract}
Generalised Sierpi\'nski carpets are planar sets that generalise the well-known Sierpi\'nski carpet and are defined by means of sequences of patterns. We study the structure of the sets at the $k$th iteration in the construction of the generalised carpet, for $k\ge 1$. Subsequently, we show that certain families of patterns provide total disconnectedness of the resulting generalised carpets. Moreover, analogous results hold even in a more general setting. Finally, we apply the obtained results in order to give an example of a totally disconnected generalised carpet with box-counting dimension $2$.
\end{abstract}
\noindent
AMS Classification [2010]{54H05, 28A80, 05C10}\\
Keywords: {fractals, Sierpi\'nski carpet, connectedness, graph} 

\section{Introduction}
{\em Sierpi\'nski carpets} are self-similar fractals in the plane that originate from the Sierpi\'nski carpet\cite{Mandelbrot1983, Sierpinski} and are constructed
in the following way: start with the unit square, divide it into $n \times n$ congruent smaller subsquares and cut out $m$ of them, corresponding to a given $n \times n$ pattern (called the generator of the Sierpi\'nski carpet). This construction step is repeated with all the remaining subsquares ad infinitum. The resulting object is a fractal of Hausdorff and box-counting dimension $\log(n^2 -m)/ \log(n)$, called a \emph{Sierpi\'nski carpet}\cite{the_pore_structure}. At each step of the iterative construction the corresponding squares are deleted, together with their boundary, and then the closure (with respect to the topology induced by the Euclidean metric in the plane) is taken. Sierpi\'nski carpets have been used, e.g., as models for porous materials \cite{the_pore_structure, modelling_porous_structures}.
The ``Cantor dust'' (see, e.g., Falconer\cite{Falconer1990}) is an example of a totally disconnected Sierpi\'nski carpet, for $n=4$ and $m=4$.

In the present paper, we study planar sets that generalise the Sierpi\'nski carpets mentioned before, namely 
\emph{generalised Sierpi\'nski carpets} (which we sometimes call, in short, \emph{generalised carpets}). A generalised Sierpi\'nski carpet differs from a Sierpi\'nski carpet defined as above slightly in the construction and in the following aspects. At step $k$ of the construction, for any $k\ge 1$, we apply an $m_k \times m_k$ pattern, where $m_k\ge 2$, for all $k \ge 1$, and, at any two steps $k_1 \ne k_2$ we may have distinct patterns, with $m_{k_1}\ne m_{k_2}$. 

In a very recent paper Cristea and Steinsky\cite{CristeaSteinsky_ConnectedGSC} presented necessary and sufficient conditions, under which generalised Sierpi\'nski carpets are connected, with respect to Euclidean topology.
Hata\cite{Hata-selfsimilar85} studied connectedness properties of self similar fractals, and Cristea\cite{netsets} studied connectedness properties of fractals, that are, under certain conditions, a special case of the generalised Sierpi\'nski carpets analysed in the present paper.

Now, we give a short outline of this paper.
In the next section we present the construction of generalised Sierpi\'nski carpets by means of {\em patterns}, and several definitions of notions that occur throughout this paper. 
In Section \ref{sec:families_of_patterns} we introduce six special families of patterns which occur in the considerations of the sections to follow.
Section \ref{sec:structure} consists of results about the effect of these patterns on the structure of generalised Sierpi\'nski carpets. We show a somewhat ``periodic'' structure of the (complements of) the sets at the $k$th iteration in the construction of generalised carpets, for $k\ge 1$. In Section \ref{sec:totally_disconnected} we prove our main result, which gives sufficient conditions for generalised Sierpi\'nski carpets to be totally disconnected. We remark that while the proofs in our paper on connected generalised Sierpi\'nski carpets\cite{CristeaSteinsky_ConnectedGSC} are mainly based on graph-theoretical arguments, in the present paper, shapes and distances play an important role.  
Finally, as an application, we give an example of a construction of a totally disconnected generalised Sierpi\'nski carpet with box-counting dimension $2$.
\section{Definitions and construction}\label{sec:definitions_and_construction}
Let $x,y,q\in [0,1]$ such that $Q=[x,x+q]\times [y,y+q]\subseteq [0,1]\times [0,1]$. Then for any point $(z_x,z_y)\in[0,1]\times [0,1]$ we define the function
\[
P_Q(z_x,z_y)=(q z_x+x,q z_y+y).
\]
Let $m\ge 1$. $S_{i,j}^{m}=\{(x,y)\mid \frac{i}{m}\le x \le \frac{i+1}{m} \mbox{ and } \frac{j}{m}\le y \le \frac{j+1}{m} \}$ and  
${\cal S}_m=\{S_{i,j}^{m}\mid 0\le i\le m-1 \mbox{ and } 0\le j\le m-1 \}$. 
We call any nonempty $\A \subseteq \S_m$ an $m$-\emph{pattern}. Let $\{{\cal A}_k\}_{k=1}^{\infty}$ be a sequence of non-empty patterns and $\{m_k\}_{k=1}^{\infty}$ be the corresponding \emph{width-sequence, i.e., for all $k\ge 1$ we have ${\cal A}_k\subseteq {\cal S}_{m_k}$.} We let ${\cal W}_1={\cal A}_{1}$, and call it the \emph{set of white squares of level $1$}. Then we define ${\cal B}_1={\cal S}_{m_1} \setminus {\cal W}_1$ as the \emph{set of black squares of level $1$}. For $n\ge 2$ we define the \emph{set of white squares of level $n$} by 
\[{\cal W}_n=\bigcup_{W\in {\cal A}_{n}, W_{n-1}\in {\cal W}_{n-1}}\{ P_{W_{n-1}}(W)\}.
\]
\rem{\rem{\noindent}} For a sequence of patterns $\{{\cal A}_k\}_{k=1}^{\infty}$ with width sequence $\{m_k\}_{k=1}^{\infty}$ we introduce the notation $m(i):=\prod_{k=1}^{i}m_k$. 
In all the considerations to follow we will assume $m_k \ge 2$, for all $k \ge 1$. We note that ${\cal W}_n \subset {\cal S}_{m(n)}$, and we define the \emph{set of black squares of level $n$}, ${\cal B}_n={\cal S}_{m(n)} \setminus {\cal W}_n$. For $n\ge 1$, we define $L_n=\bigcup_{W\in {\cal W}_n} W$. Therefore, $\{L_n\}_{n=1}^{\infty}$ is a monotonically decreasing sequence of compact sets. We write $L_{\infty}=\bigcap_{n=1}^{\infty}L_n$, for the \emph{limit set of the pattern sequence $\{{\cal A}_k\}_{k=1}^{\infty}$}. 

For any $0\le i \le m(k)-1$ we call $\cup_{j=0}^{m(k)-1} \{S^{m(k)}_{i,j}\}$ a \emph{column of level} $k$. Moreover, we call $\cup_{j=0}^{m(k)-1} \{S^{m(k)}_{0,j}\}$ the \emph{left column of level} $k$ (in short the \emph{left column of $\S_{m(k)}$}), and $\cup_{j=0}^{m(k)-1} \{S^{m(k)}_{m(k)-1,j}\}$ the \emph{right column of level} $k$ (in short the \emph{right column of $\S_{m(k)}$}). Analogously, for any $0\le j \le m(k)-1$ we call $\cup_{i=0}^{m(k)-1} \{S^{m(k)}_{i,j}\}$ a \emph{row of level} $k$. $\cup_{i=0}^{m(k)-1} \{S^{m(k)}_{i,0}\}$ is the \emph{bottom row of level} $k$ (in short the \emph{bottom row of} $\S_{m(k)}$) and $\cup_{i=0}^{m(k)-1} \{S^{m(k)}_{i,m(k)-1}\}$ is the \emph{top row of level} $k$ (in short the \emph{top row of} $\S_{m(k)}$). From the above definitions we obtain, for $k=1$ and $m(1)=m$, the left and the right column in $\S_m$, and, respectively, the bottom and the top row in $\S_m$.

A {\em graph} $\mathcal{G}$ is a pair $({V},{E})$, where ${V}={V}(\mathcal{G})$ is a finite set of vertices, and the set of edges ${E}={E}(\mathcal{G})$ is a subset of $\{\{u,v\} \mid u,v \in {V}, u\neq v\}$. We write $u \sim v$ if $\{u,v\}\in {E}(\mathcal{G})$ and sometimes we say $u$ is a \emph{neighbour} of $v$. The sequence of vertices $\{u_i\}_{i=0}^{n}$ is a \emph{path between $u_0$ and $u_n$} in a graph $\mathcal{G}\equiv ({V},{E})$, if $u_0,u_1,\ldots,u_n\in {V}$, $u_{i-1}\sim u_i$ for $1 \le i\le n$, and $u_i\neq u_j$ for $0\le i<j\le n$. A \emph{connected component} is an equivalence class of the relation, where two vertices are related if there is a path between them.
A path in a graph $\G = (V(\G), E(\G))$ from a vertex $x\in V(\G)$ to a vertex $y \in V(\G)$ is called \emph{minimal} if any proper subsequence of the sequence of vertices of 
the path is not a path from $x$ to $y$.

For $\W \subseteq \S _m$ we define $\mathcal{G}({\cal W})\equiv ({V}(\mathcal{G}({\cal W})),{E}(\mathcal{G}({\cal W})))$ to be the graph of ${\cal W}$, i.e., the graph whose vertices ${V}(\mathcal{G}({\cal W}))$ are the squares in ${\cal W}$, and whose edges ${E}(\mathcal{G}({\cal W}))$ are the unordered pairs of distinct white squares, that have a nonempty intersection. We call any path in $\G(\B_n)$ a \emph{black path of level} $n$. If $p=\{S_i\}_{i=1}^r$ is a path in $\G(\W_n)$ or $\G(\B_n)$ then we call $\Gamma(p):=\cup_{i=1}^r S_i$ \emph{the corridor of the path} $p$.
Let $S_{x_0,y_0}^{m(n)}$ and $S_{x_1,y_1}^{m(n)}$ be white squares of level $n$.
If $e=\{S_{x_0,y_0}^{m(n)},S_{x_1,y_1}^{m(n)}\}\in \mathcal{E}(\mathcal{G}({\cal W}_{n}))$ 
then $e$ can be of four different types. If $|x_0-x_1|=1$ and $y_0=y_1$ then we say $e$ is 
$\EH$. If $|y_0-y_1|=1$ and $x_0=x_1$ then we say $e$ is $\EV$. If $(x_0-x_1)(y_0-y_1)=-1$ 
then we say $e$ is $\EDB$. If $(x_0-x_1)(y_0-y_1)=1$ then we say $e$ is $\EDS$.
\section{Special families of $m$-patterns}
\label{sec:families_of_patterns}
For an $m$-pattern $\A$ we denote by ${\A}^c$ the set $\S_m \setminus \A.$ For any $\A \subseteq \S_m$ we define $\Gs (\A)= \left( V(\Gs (\A)), E(\Gs (\A))\right)$ to be the graph whose set of vertices $V(\Gs (\A)) $ consists of the squares $S_{i,j}^{m}$ that are elements of $\A$ and whose set of edges consists of unordered pairs of distinct squares that are elements of $\A$ and have a common side. In the following, we introduce several particular types of pattern.
An $m$-pattern $\A$ is \emph{of type 
$\v$ (``vertically cutting'')}, if $\G({\A}^c)$ 
contains a connected component $\G(K)$ 
that corresponds to a subset $K$ of $\A^{c}$, connects the top and the bottom row of $\mathcal{S}_m$,
and has the property that there exist indices $i_1, i_2$ such that
\[
i_1 \in \{i, S_{i,m-1}^{m} \in K \},~~i_2 \in \{i, S_{i,0}^{m} \in K \} \mbox{ and } i_2 \in \{i_1-1,i_1,i_1+1 \}
.
\] 
We also denote by $\v$ the family of all patterns of type $\v$. An $m$-pattern $\A$ is \emph{of type $\h$ (``horizontally cutting'')},
if $\G({\A}^c)$ contains a connected component $\G(K)$ that corresponds to a subset $K$ of ${\A}^c$, connects the left and the right column of $\S_m$, and has the property that there exist indices $j_1, j_2$ such that
\[ 
j_1 \in \{ j \in S_{0,j}^{m} \in K\},~~j_2 \in \{j, S_{m-1,j}^{m} \in K \} \mbox{ and } j_2 \in \{j_1 -1, j_1, j_1 + 1\}.
\]
\begin{figure}[hhhh]
\begin{center}
\includegraphics[scale=0.6]{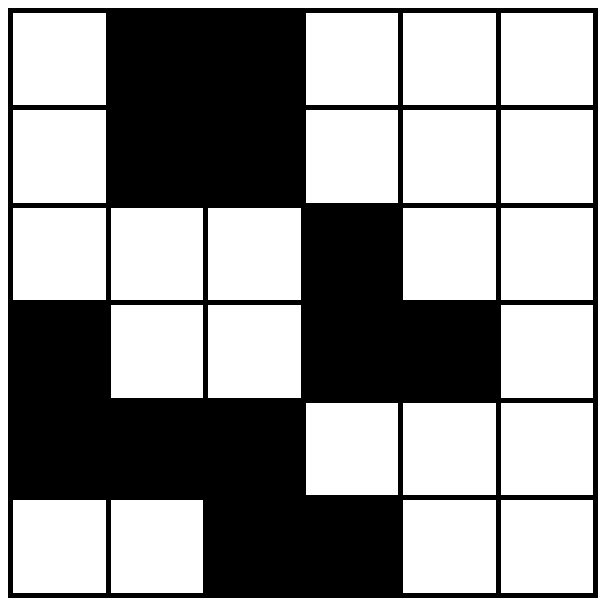}\hspace{0.2cm}\includegraphics[scale=0.6]{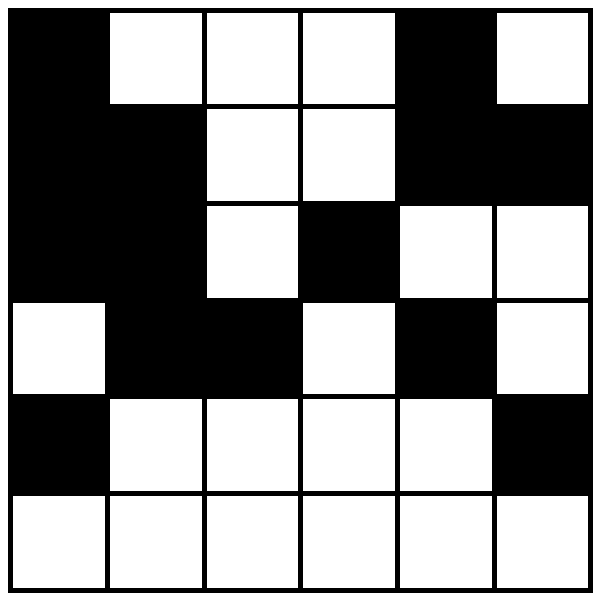}\hspace{0.2cm}\includegraphics[scale=0.6]{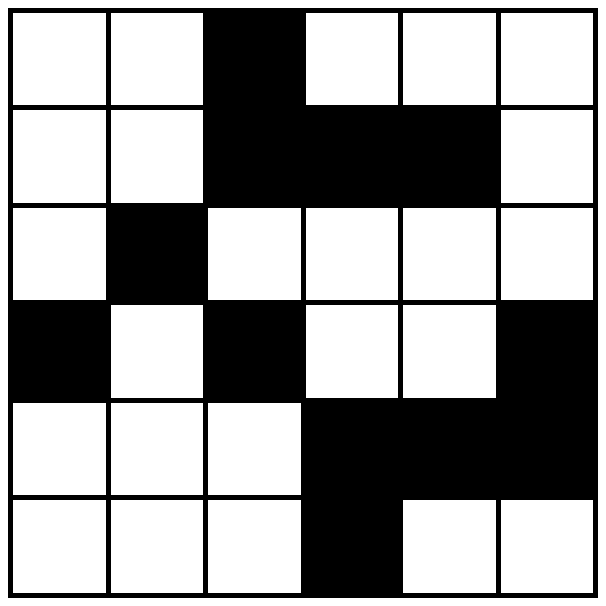}
\caption{Patterns of type $\v$, $\h$, and both $\v$ and $\h$, respectively.}\label{fig:vh}
\end{center}
\end{figure}
We also denote by $\h$ the family of all patterns of type $\h$. \rem{\noindent} An $m$-pattern $\A$ is \emph{of type
$\done$ (``diagonally cutting parallel to the first diagonal'')} in the following two cases:
\begin{enumerate}
\item
$\G({\A}^c )$ contains a connected component $\G(K)$ corresponding to a subset $K$ of ${\A}^c$, such that 
\[
\{S_{0,0}^{m},~S_{m-1,m-1}^{m}\} \subseteq K,
\]
\item 
$\G ({\A}^c )$ contains a connected component $\G(K_1)$ that corresponds to a subset $K_1$ of ${\A}^c$ and connects the left column and the top row of $\S_m$, and a connected component $\G(K_2)$ that corresponds to a subset $K_2$ of ${\A}^c$ and connects the bottom row and  the right column of $\S_m$, such that, on the one hand, there exist indices $j_1, j_2$ such that
\[ 
j_1\in \{ j, S_{0,j}^{m} \in K_1 \}, ~~
j_2 \in \{j, S_{m-1,j}^{m} \in K_2 \}, \mbox{ and } j_2 \in \{j_1-1, j_1, j_1 +1\}, 
\]
and, on the other hand, there exist indices $i_1, i_2$ such that
\[
i_1 \in \{i, S_{i,m-1}^{m} \in K_1 \},~~
i_2 \in \{i, S_{i,0}^{m} \in K_2 \}, \mbox{ and } i_2 \in \{i_1-1, i_1, i_1 +1\}.
\] 
\end{enumerate}
\rem{\noindent}
We also denote by $\done$ the family of all patterns of type $\done$. An $m$-pattern $\A$ is \emph{of type
$\dtwo$ (``diagonally cutting parallel to the second diagonal'')} in the following two cases:
\begin{enumerate}
\item 
$\G ({\A}^c)$ contains a connected component $\G(K)$ corresponding to a subset $K$ of ${\A}^c$, such that 
\[
\{S_{0,m-1}^{m},~S_{m-1,0}^{m}\} \subseteq K,
\]
\item  
$\G ({\A}^c )$ contains a connected component $\G(K_1)$ that corresponds to a subset $K_1$ of ${\A}^c$ and connects the left column and the bottom row of $\S_m$, and a connected component $\G(K_2)$ that corresponds to a subset $K_2$ of ${\A}^c$ and connects the top row and the right column of $\S_m$, such that, on the one hand, there exist indices $j_1, j_2$ such that
\[
 j_1 \in \{j, S_{0,j}^{m} \in K_1 \}, j_2 \in \{j, S_{m-1,j}^{m} \in K_2 \} \mbox{ and } j_2 \in \{j_1-1, j_1, j_1 +1\}, 
\] 
and, on the other hand, there exist indices $i_1, i_2$ such that
\[
i_1 \in \{i, S_{i,0}^{m} \in K_1 \}, i_2 \in \{i, S_{i,m-1}^{m} \in K_2 \} \mbox{ and } i_2 \in \{i_1-1, i_1, i_1 +1\}. 
\]
\end{enumerate} 
\rem{\noindent} We also denote by $\dtwo$ the family of all patterns of type $\dtwo$.

\begin{figure}[hhhh]
\begin{center}
\includegraphics[scale=0.45]{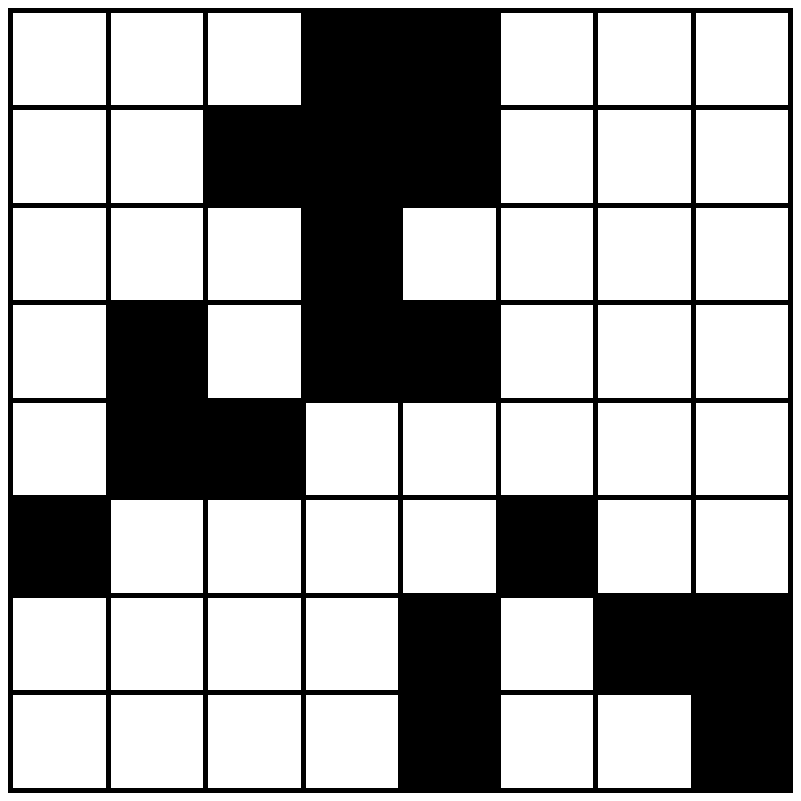}\hspace{0.2cm}\includegraphics[scale=0.45]{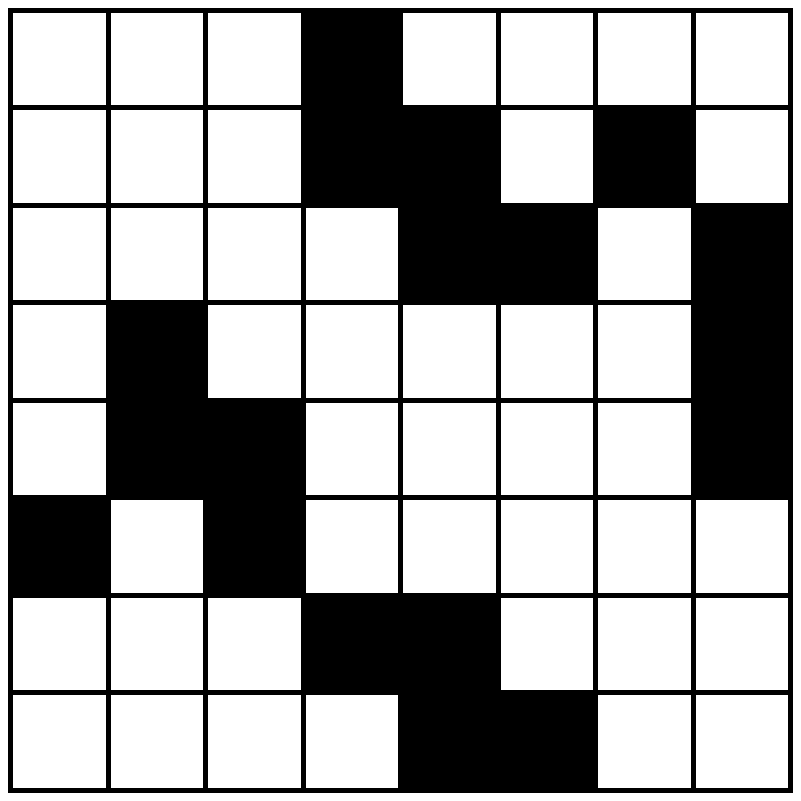}\hspace{0.2cm}\includegraphics[scale=0.45]{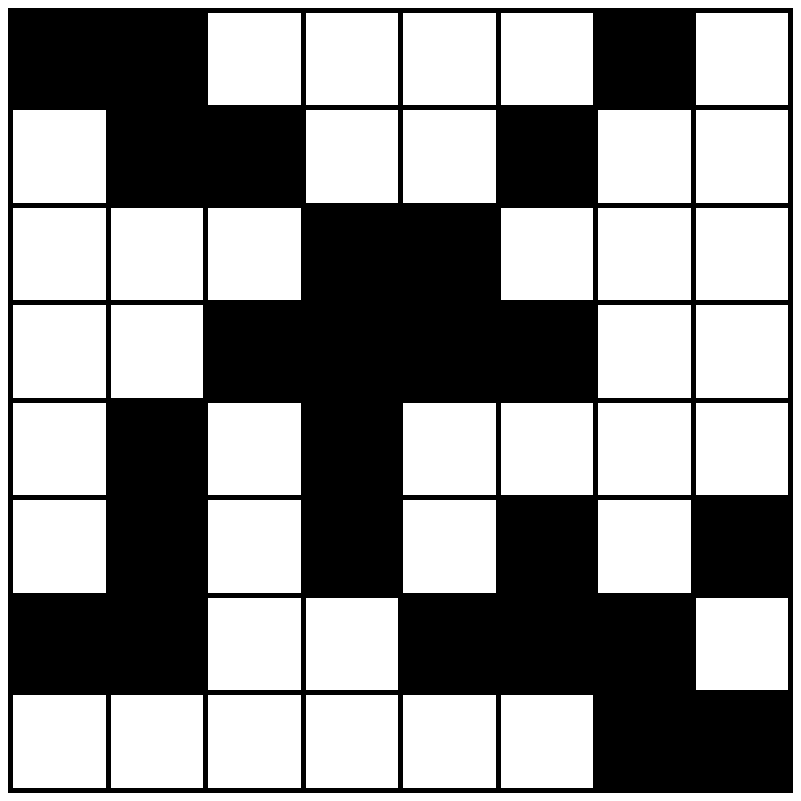}
\caption{Patterns of type $\done$, $\dtwo$, and both $\done$ and $\dtwo$, respectively. }\label{fig:d1d2}
\end{center}
\end{figure}
\rem{\noindent}
An $m$-pattern $\A$ is \emph{of type $\cone $, (``corner square on the first diagonal'')} if 
\[
 \{S_{0,0}^{m},S_{m-1,m-1}^{m}\} \cap {{\A}^c} \ne \emptyset,
\]
and \emph{of type $\ctwo $, (``corner square on the second diagonal'')} if
\[
\{S_{0,m-1}^{m},S_{m-1,0}^{m}\} \cap {{\A}^c} \ne \emptyset.
\]
We denote by $\cone$ and $\ctwo$ the family of all patterns of type $\cone$ and $\ctwo$, respectively.

\begin{figure}[hhhh]
\begin{center}
\includegraphics[scale=0.5]{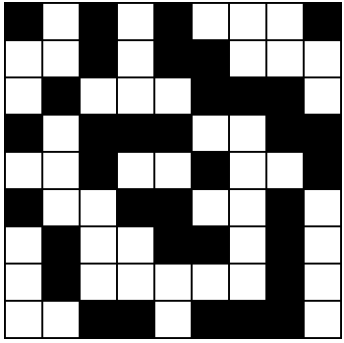}
\caption{A pattern of all types.}\label{fig:olle}
\end{center}
\end{figure}

\section{On the structure of generalised Sierpi\'nski carpets given by the occurrence of patterns of type $\v$, $\h$, $\done$, and $\dtwo$ }
\label{sec:structure}

Throughout this section, we assume, when dealing with sequences of patterns $\{{\cal A}_k\}_{k=1}^{\infty}$ that these patterns define generalised Sierpi\'nski carpets, i.e., we use ${\cal B}_k$, ${\cal W}_k$, and $L_k$ with the meaning given in Section~\ref{sec:definitions_and_construction}.

\begin{proposition} 
\label{prop_v_or_h_paths}
Let $\{\A_k\}_{k=1}^{\infty}$ be a sequence of patterns with width-sequence $\{m_k\}_{k=1}^{\infty}$. Let $1< k_1<k_2, k_3$, and $\A_{k_1} \in \v \cup \h$, $\A_{k_2} \in \cone$ and $\A_{k_3}\in \ctwo$. 
\begin{enumerate}
\item
If $\A_{k_1} \in \v$, then there exist $m(k_1 -1)$ distinct paths in $\Gs(\B_{k_3})$, each of them connecting some square of $\B_{k_3}$ lying in the top row of $\S_{m(k_3)}$ with some square of $\B_{k_3}$ lying in the bottom row of $\S_{m(k_3)}$. Each of these paths is contained in a column of level $k_1-1$. 
\item
If $\A_{k_1} \in \h$, the analogous statements hold for paths in $\Gs(\B_{k_3})$ that connect squares  that lie in the left column and in the right column of $\S_{m(k_3)}$.
\end{enumerate}
\end{proposition}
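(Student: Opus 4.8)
The plan is to build, for each of the $m(k_1-1)$ columns of level $k_1-1$ \emph{separately}, a black top-to-bottom path, and then to observe that the $m(k_1-1)$ resulting paths are distinct simply because they lie in disjoint columns. Fix one such column. The $m(k_1-1)$ squares of level $k_1-1$ stacked inside it are each either black (an element of $\B_{k_1-1}$) or white (an element of $\W_{k_1-1}$). Inside a black square of level $k_1-1$ the whole region is black, while inside a white square of level $k_1-1$ the pattern $\A_{k_1}\in\v$ produces, by the definition of type $\v$, a connected component $K$ of $\A_{k_1}^c$ joining the top and the bottom side of that square. I would first record the monotonicity fact that a square which is black at some level stays black, as a region, at every finer level, so that all these black pieces survive, subdivided, in $\B_{k_3}$.

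First I would produce a connected top-to-bottom \emph{corridor} inside the column in the coarse graph $\G(\B_{k_1})$, whose adjacency includes corners. Stacking the pieces from bottom to top, consecutive black squares of level $k_1-1$, as well as the junction between a black square of level $k_1-1$ and the cut of a white neighbour, cause no trouble, since a full black square covers the entire shared side. The delicate junctions are those between the cuts of two vertically adjacent white squares of level $k_1-1$: there the lower cut meets the shared side in some column $i_1$ and the upper cut in some column $i_2$, and the defining inequality $i_2\in\{i_1-1,i_1,i_1+1\}$ guarantees that the two boundary squares are at worst diagonally adjacent, hence adjacent in $\G$. Together with the internal adjacencies of each $K$ (which are again only guaranteed in $\G$), this yields a connected black set meeting both the top and the bottom row of $\S_{m(k_1)}$; from it I would extract a minimal $\G$-path $p$ and keep its corridor $\Gamma(p)$.

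The heart of the proof is upgrading $p$ to a path in the side-adjacency graph $\Gs(\B_{k_3})$, i.e.\ removing every diagonal adjacency of $p$. At a diagonal adjacency two black squares meet only at a grid vertex $x$, and exactly two further squares of level $k_1$ touch $x$. If one of these two flanking squares is itself black, then at level $k_3$ its corner descendant is black and supplies the missing side-adjacent detour around $x$ for free. The remaining case is that both flanking squares are white, and here I would invoke the corner families. A diagonal adjacency occurs in two orientations: for one orientation the flanking squares present at $x$ their anti-diagonal corners $S_{0,m-1}^{m}$ and $S_{m-1,0}^{m}$, and $\A_{k_3}\in\ctwo$ forces one of these corner descendants to become black; for the other orientation they present their main-diagonal corners $S_{0,0}^{m}$ and $S_{m-1,m-1}^{m}$, and $\A_{k_2}\in\cone$ forces one of these to become black. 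The key verification is that the corner descendant produced is exactly the sub-square abutting $x$: one tracks the corner-most descendant through the refinements $\A_{k_1+1},\dots$, notes that as soon as a corner pattern of the required diagonal is applied that descendant turns black while still touching $x$, and uses monotonicity to keep it black thereafter. Inserting this black corner square restores side-adjacency at $x$. I expect this step --- checking that the correct corner is blackened at each diagonal junction, in both orientations, and that it persists to the level at which the path is read off --- to be the main obstacle; it is also exactly why both $\cone$ and $\ctwo$ are assumed (and one uses here that both corner patterns have been applied by that level).

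Performing this in every column of level $k_1-1$ gives $m(k_1-1)$ paths in $\Gs(\B_{k_3})$, each joining the top and the bottom row of $\S_{m(k_3)}$ and each confined to its own column, so that they are automatically distinct; this establishes the first statement. For the second statement I would apply the reflection across the main diagonal $(x,y)\mapsto(y,x)$, which interchanges $\v$ and $\h$, interchanges columns and rows, and fixes each of $\cone$, $\ctwo$ and both diagonal orientations, so that the horizontal case follows verbatim from the vertical one.
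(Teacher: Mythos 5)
Your proposal is correct and takes essentially the same route as the paper's own proof: in each column of level $k_1-1$ you build a corner-adjacency black path from the definition of $\v$ (using the index condition $i_2\in\{i_1-1,i_1,i_1+1\}$ at junctions), and then repair the two orientations of diagonal-only adjacencies by letting $\A_{k_2}\in\cone$ and $\A_{k_3}\in\ctwo$ blacken a flanking corner square touching the junction point, exactly the mechanism (and the same orientation-to-corner-family assignment) the paper uses, including the implicit requirement that both corner patterns act by the level at which the path is read off. Your reflection argument for the $\h$ case is just a cleaner phrasing of the paper's ``analogous'' remark, and your local treatment of each diagonal junction subsumes the paper's two-case split on whether $\Gs(K_{k_1})$ is connected.
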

\begin{proof} 
We prove the statements for $\A_{k_1} \in \v$, as the proof for $\A_{k_1} \in \h$ is analogous. Also, we assume, without loss of generality, that $k_2\le k_3$.
Let us consider $Q_1, Q_2 \in V(\G(\W_{k_1 -1}))$ such that $Q_1$ and $Q_2$ lie in the same column of level $k_1 -1$ and have a common side, namely the bottom side of $Q_1$ and the top side of $Q_2$, respectively. As $\A_{k_1} \in \v$, it follows, by the definition of $\v$, that there exist squares $S_1, S_2 \in V(\G(\B_{k_1}))$ such that $S_1$ lies in the bottom row of level $k_1$ of $Q_1$,  $S_2$ lies in the top row of level $k_1$ of $Q_2$ and there exists an edge in $E(\G(\B_{k_1}))$ that connects $S_1$ and $S_2$. Thus, there is a path in $\G(\B_{k_1})$ contained in $Q_1 \cup Q_2$ that connects a black square of level $k_1$ in the top row of level $k_1$ of $Q_1$ with a black square of level $k_1$ in the bottom row of level $k_1$ of $Q_2$. Of course, this statement also holds if $Q_1 \in V(\G(\B_{k_1 -1}))$ or $Q_2 \in V(\G(\B_{k_1 -1}))$. Let us assume, e.g., $Q_1 \in V(\G(\B_{k_1 -1}))$. Then we can choose inside $Q_1$ the set of black squares that correspond to the set $K=K(\A_{k_1})$ occurring in the definition of $\A_{k_1} \in \v$. Furthermore, we can consider in $Q_1$ the black path of level $k_1$ that is the translation of the black path of level $k_1$ inside any other $Q \in V(\G(\W_{k_1 -1}))$, as the pattern $\A_{k_1}$ is applied to all white squares of level $k_1 -1$.

Let us now consider a column of level $k_1 -1$ in the unit square, denoted by $C$. We start from the square of level $k_1 -1$ situated in $C$ in the top row, and construct, as described above for $Q_1$ and $Q_2$ (passing to the square situated below $Q$ and then continuing inductively) a path in $\G(\B_{k_1})$ that connects a square lying in the top row of $\S_{m(k_1)}$ with a square lying in the bottom row of $\S_{m(k_1)}$. Let us denote this path by $p$. (By the construction of $p$, the corridor of $p$ is contained in the union of the squares of the column $C$.) 
The squares that are elements of $p$ correspond to the vertices in a connected component $\G(K_{k_1})$ of $\G(\B_{k_1})$. 

First, we consider the case when $\Gs(K_{k_1})$ is connected. Let $K_{k_3}$ be the set of all squares of level $k_3$ that are contained in some squares of level $k_1$ of $\G(K_{k_1})$. Then, with the above notations, we can chose inside the square $Q_1$, a path $p(Q_1)$ in $\Gs(\B_{k_3})$ that connects a square $T_1\in V(\G(\B_{k_3}))$ that lies in the top row of level $k_3$ of $Q_1$ with a square $R_1 \in V(\G(\B_{k_3}))$ that lies in the bottom row of level $k_3$ of $Q_1$, such that $T_1$ and $R_1$ lie in the same column of level $k_3$ of $Q_1$.
Now, by applying to the squares of $p(Q_1)$ the translation parallel to the $Oy$-axis that transforms $Q_1$ into $Q_2$ we obtain a path $p(Q_2)$ that connects a square $T_2$ situated in the top row of level $k_3$ of $Q_2$ with a square $R_2$ situated in the bottom row of level $k_3$ of $Q_2$, such that the bottom side of $R_1$ and the top side of $T_2$ coincide. Proceeding inductively we obtain a path $p'$ with $\Gamma(p')\subseteq \Gamma(p)$, by concatenating the paths $p(Q_1)$, $p(Q_2)$, \dots ,$p(Q_{k_1-1})$, where $Q_{k_1 -1}$ lies in the column $C$, in the bottom row of $\S_{m(k_1 -1)}$.

Now, we consider the case when $\Gs (K_{k_1})$ is not connected. Then there are two black squares contained in $K_{k_1}$ that are connected with each other only by 
$\EDS$ 
or 
$\EDB$ 
edges. Suppose $S'_1$ and $S'_2$ are two black squares of $K_{k_1}$ that are connected only by a 
$\EDB$ 
edge, where $S'_1$ is the upper square. Let $\{P\}= S'_1 \cap S'_2$. Then, at step $k_2 -1$ of the construction, $P$ is either the intersection point of exactly two black squares of level $k_2 -1$ (situated in the lower right corner of $S'_1$ and the upper left corner of $S'_2$) which are connected in $\G(\B_{k_2})$ by a 
$\EDB$ 
edge, or the intersection point of at least three black squares of level $k_2-1$. In both cases, applying the pattern $\A_{k_2} \in \cone$ on $L_{k_2 -1}$, yields, by the definition of $\cone$, that, at step $k_2$ of the construction of $\linf$, $P$ is the intersection of at least three distinct black squares of level $k_2$ belonging to $\B_{k_2}$. Analogously, applying the pattern $\A_{k_3}$ has the effect that points that were the intersection of exactly two black squares of level $k_2$, connected by a $\EDS$ edge, become the intersection of at least three black squares of level $k_3$. Let $B_{k}(p)$, for $k= k_2,k_3$, be the set of all black squares of level $k$ that lie in the same column $C$ and have a common side with some square of level $k_1$ that belongs to the path $p$. Let now $K_{k_3}$ be the set consisting of all black squares of level $k_3$ that are in the path $p$, together with all the black squares in $B_{k_3}(p)$, and all the black squares of level $k_3$ that are contained in some black square of $B_{k_2}(p)$ and share a side with some black square of level $k_3$  which is a subset of a black square occurring in $p$. Then $K_{k_3}$ contains a path $p'$ of level $k_3$ in $\G^s(\B_{k_3})$, with $\Gamma(p)\subseteq \Gamma(p')$, that connects a square of level $k_3$ lying in the top row of $\S_{m(k_3)}$ with some square of level $k_3$ lying in the bottom row of $\S_{m(k_3)}$, and $p'$ is contained in $C$.
\end{proof}
We call the paths occurring in Proposition~\ref{prop_v_or_h_paths} \emph{vertical paths of level} $k_3$ and \emph{horizontal paths of level} $k_3$ in the unit square, respectively.
Proceeding  analogously as in Proposition \ref{prop_v_or_h_paths}, one can prove the following result.
\begin{proposition}
\label{prop_diag_paths}
Let $\{\A_k\}_{k=1}^{\infty}$ be a sequence of patterns with width-sequence $\{m_k\}_{k=1}^{\infty}$. Let $1< k_1<k_2, k_3$, and $\A_{k_1} \in \done \cup \dtwo$, $\A_{k_2} \in \cone$ and $\A_{k_3}\in \ctwo$. 
\begin{enumerate}
\item If $\A_{k_1} \in \done$, then the following statements hold. There exist $m(k_1 -1)$ distinct paths in $\Gs(\B_{k_3})$, each of them connecting some square of $\B_{k_3}$ lying in the left column of $\S_{m(k_3)}$ with some square of $\B_{k_3}$ lying in the top row of $\S_{m(k_3)}$. There exist $m(k_1 -1)$ distinct paths in $\Gs(\B_{k_3})$, each of them connecting some square of $\B_{k_3}$ lying in the bottom row of $\S_{m(k_3)}$ with some square of $\B_{k_3}$ lying in the right column of $\S_{m(k_3)}$. 
\item
The analogous statements hold for $\A_{k_1} \in \dtwo$ and the corresponding paths in 
$\Gs(\B_{k_3})$. 
\end{enumerate}
\end{proposition}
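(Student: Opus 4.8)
The plan is to mimic the structure of the proof of Proposition~\ref{prop_v_or_h_paths} almost verbatim, replacing the role of the vertical/horizontal cut with the diagonal cut coming from $\A_{k_1}\in\done$, and I would prove the statements for $\done$ in detail and remark that the $\dtwo$ case is entirely analogous. First I would fix a column of level $k_1-1$ and examine how the pattern $\A_{k_1}\in\done$ acts inside a white square $Q$ of level $k_1-1$. By the definition of $\done$, either there is a connected component of $\G(\A_{k_1}^c)$ containing both corner squares $S^{m_{k_1}}_{0,0}$ and $S^{m_{k_1}}_{m_{k_1}-1,m_{k_1}-1}$, or there are two components $\G(K_1),\G(K_2)$, one connecting the left column to the top row and the other connecting the bottom row to the right column, with the matching index conditions $j_2\in\{j_1-1,j_1,j_1+1\}$ and $i_2\in\{i_1-1,i_1,i_1+1\}$. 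In either case one obtains inside $Q$ a black path of level $k_1$ running from (a square on) the left column/top corner region to (a square on) the top row, and another from the bottom row to the right column; the index conditions guarantee that these partial diagonal segments in adjacent squares of the column can be concatenated side-to-side.

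Next I would carry out the concatenation step. Just as in Proposition~\ref{prop_v_or_h_paths} one translates the chosen black path $p(Q_1)$ inside $Q_1$ by the shift taking $Q_1$ to the neighbouring square $Q_2$ in the same column of level $k_1-1$, and uses that $\A_{k_1}$ is applied to every white square of level $k_1-1$ to guarantee that the translated path $p(Q_2)$ is again a genuine black path whose boundary square matches that of $p(Q_1)$. The key difference from the $\v$ case is that now the ``exit'' and ``entry'' squares of consecutive pieces meet along the diagonal-compatibility condition ($i_2\in\{i_1-1,i_1,i_1+1\}$ and likewise for $j$) rather than along a full shared side, so I must check that the adjacency in $\Gs(\B_{k_3})$ (i.e.\ a shared side, not merely a shared corner) is still produced. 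This is exactly where the auxiliary patterns $\A_{k_2}\in\cone$ and $\A_{k_3}\in\ctwo$ enter: as in the previous proof, whenever two black squares of a coarse level meet only at a single point (a $\EDS$ or $\EDB$ contact), applying a pattern of type $\cone$ at step $k_2$ forces that corner point to be the intersection of at least three black squares, and then applying $\ctwo$ at step $k_3$ upgrades the remaining diagonal contacts into genuine side-adjacencies, so the sets $B_{k_2}(p)$, $B_{k_3}(p)$ can be assembled into a set $K_{k_3}$ carrying an honest $\Gs(\B_{k_3})$-path $p'$ with $\Gamma(p)\subseteq\Gamma(p')$.

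I would then assemble the global path. Starting from the top square of the fixed column of level $k_1-1$ and moving downward square by square, the concatenation produces, for the first family of statements, a path in $\Gs(\B_{k_3})$ running from a black square of level $k_3$ in the left column of $\S_{m(k_3)}$ to one in the top row, contained in the chosen column; running the construction over all $m(k_1-1)$ columns of level $k_1-1$ yields the asserted $m(k_1-1)$ distinct paths. The second family (from the bottom row to the right column) is produced symmetrically from the component $K_2$ in the definition of $\done$, and the $\dtwo$ case follows by reflecting across a diagonal. The main obstacle I anticipate is precisely the corner-to-side upgrade: one must argue carefully that after applying $\cone$ and $\ctwo$ the diagonal contacts between consecutive diagonal segments — which for a $\done$ cut are the generic way the pieces meet — really do become side-adjacencies in $\Gs(\B_{k_3})$, so that $p'$ is a path in $\Gs(\B_{k_3})$ and not merely in $\G(\B_{k_3})$. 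Since this is the same mechanism as in the proof of Proposition~\ref{prop_v_or_h_paths}, I would invoke that argument rather than repeat it, which is what the phrase ``proceeding analogously'' is meant to license.
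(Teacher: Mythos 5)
Your overall strategy --- adapting the proof of Proposition~\ref{prop_v_or_h_paths}, with $\A_{k_2}\in\cone$ and $\A_{k_3}\in\ctwo$ used to upgrade the corner-only diagonal contacts between black squares into side-adjacencies, so that one obtains a path in $\Gs(\B_{k_3})$ rather than merely in $\G(\B_{k_3})$ --- is exactly what the paper intends by ``proceeding analogously'', and that part of your write-up is sound. However, your global assembly step contains a genuine geometric error. You concatenate the pieces vertically, square by square downward inside a fixed column of level $k_1-1$, and you claim the resulting path is contained in that column and joins a square of $\B_{k_3}$ in the left column of $\S_{m(k_3)}$ to one in the top row of $\S_{m(k_3)}$. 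This is impossible for every column except the leftmost one: the squares of $\B_{k_3}$ lying in the left column of $\S_{m(k_3)}$ all sit inside the leftmost column of level $k_1-1$, so a path confined to any other column can never reach them. (Note that Proposition~\ref{prop_diag_paths}, unlike Proposition~\ref{prop_v_or_h_paths}, deliberately makes no claim that the paths lie in columns.) Worse, the black set produced by $\A_{k_1}\in\done$ inside a single column is not even connected across the column: in case 2 of the definition of $\done$, the piece $K_1$ (left-to-top) of a square $Q_{i,j}$ attaches, via the index condition on $i_1,i_2$, to the piece $K_2$ (bottom-to-right) of the square $Q_{i,j+1}$ above it, and that elbow then exits through the \emph{right} side of the column; it is not connected, inside the column, to the elbow starting one level higher. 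So vertical concatenation never yields a left-to-top path.

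The correct assembly runs \emph{diagonally}. In case 2, a path of the first family starts at $Q_{0,j}$ with $K_1$ (touching the left edge of the unit square), continues with $K_2$ of $Q_{0,j+1}$, then $K_1$ of $Q_{1,j+1}$, then $K_2$ of $Q_{1,j+2}$, and so on, alternating upward and rightward moves through a staircase of level-$(k_1-1)$ squares until a $K_1$-piece lands in the top row of level $k_1-1$ and reaches the top edge; in case 1 the single component through $S_{0,0}^{m_{k_1}}$ and $S_{m_{k_1}-1,m_{k_1}-1}^{m_{k_1}}$ concatenates corner-to-corner along the true diagonal $Q_{0,j}, Q_{1,j+1}, Q_{2,j+2},\dots$. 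The $m(k_1-1)$ paths of the first family are indexed by the starting squares $Q_{0,j}$, $0\le j\le m(k_1-1)-1$, on the left edge, and the $m(k_1-1)$ paths of the second family by the starting squares $Q_{i,0}$ on the bottom edge, each such staircase alternating the two pieces in the opposite phase, beginning with $K_2$; your statement that the second family is produced ``from the component $K_2$'' alone reflects the same misreading of the geometry. Once the assembly is corrected in this way, your corner-to-side upgrade via $\cone$ at step $k_2$ and $\ctwo$ at step $k_3$ applies verbatim to the diagonal contacts inside and between the staircase pieces, and the proof goes through.
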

\begin{corollary}(``Periodicity'' for vertical and horizontal patterns.) 
\label{cor_period_v_or_h}
Under the assumptions of Proposition \ref{prop_v_or_h_paths}, there is a set ${\cal P}$ of $m(k_1 -1)$ vertical paths of level $k_3$ in $\Gs(\B_ {k_3})$ such that the following statements hold.
\begin{enumerate}
\item 
If $p,p '\in {\cal P}$ and $Q, Q' \in \S_{m(k_1 -1)}$ lie in the same row (column) of level $k_1 -1$ such that $p$ passes through $Q$ and $p'$ passes through $Q'$, then there exists a translation $T$ by a vector of length $\frac{\alpha}{m(k_1 -1)}$, $\alpha \in \mathbb{N}$, parallel to the $Ox$- ($Oy$-) axis, such that $\Gamma (p') \cap Q' = T(\Gamma (p) \cap Q)$.
\item
If $p,p '\in {\cal P}$, then there exists a translation $T$  by a vector of length $\frac{\alpha}{m(k_1-1)}$  parallel to the $Ox$-axis, $\alpha   \in \mathbb{N}$, such that $\Gamma(p ' ) = T (\Gamma(p))$.
\end{enumerate}
The analogous statements hold for horizontal paths of level $k_3$ in the case of a horizontal pattern. 
\end{corollary}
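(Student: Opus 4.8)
The plan is to take for $\mathcal{P}$ precisely the family of $m(k_1-1)$ vertical paths produced in the proof of Proposition~\ref{prop_v_or_h_paths}, one for each column of level $k_1-1$, and then to read the periodicity off the way those paths were built. The decisive structural feature I would extract from that construction is \emph{uniformity}: the pattern sequence $\A_{k_1},\dots,\A_{k_3}$ is applied to \emph{every} square of level $k_1-1$ in the same way, via the similarity $P_Q$ that carries the reference square $[0,\tfrac{1}{m(k_1-1)}]^2$ onto $Q$, irrespective of whether $Q\in V(\G(\W_{k_1-1}))$ or $Q\in V(\G(\B_{k_1-1}))$, since inside a black square one copies the same set $K(\A_{k_1})$ of black sub-squares. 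Consequently there is a single \emph{template} $\Pi\subseteq[0,\tfrac{1}{m(k_1-1)}]^2$ such that, for the square $Q$ at grid position $(a,b)$, the portion of any path of $\mathcal{P}$ meeting $Q$ equals $P_Q(\Pi)=\Pi+(\tfrac{a}{m(k_1-1)},\tfrac{b}{m(k_1-1)})$. The Proposition's construction also guarantees that the entry and exit squares of this template lie in the same column of level $k_3$, so that the local pieces concatenate by pure $Oy$-translation within a column.

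First I would prove statement~1. Suppose $Q$ and $Q'$ lie in the same row of level $k_1-1$, at grid positions $(a,b)$ and $(a',b)$, with $p$ passing through $Q$ and $p'$ through $Q'$. By the uniformity above, $\Gamma(p)\cap Q=\Pi+(\tfrac{a}{m(k_1-1)},\tfrac{b}{m(k_1-1)})$ and $\Gamma(p')\cap Q'=\Pi+(\tfrac{a'}{m(k_1-1)},\tfrac{b}{m(k_1-1)})$ involve the same template $\Pi$ and the same vertical offset $\tfrac{b}{m(k_1-1)}$. Hence $\Gamma(p')\cap Q'=T(\Gamma(p)\cap Q)$, where $T$ is the translation by $(\tfrac{a'-a}{m(k_1-1)},0)$, a vector parallel to $Ox$ of length $\tfrac{\alpha}{m(k_1-1)}$ with $\alpha=|a'-a|\in\mathbb{N}$. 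The parenthetical column/$Oy$-case is symmetric: there $p$ and $p'$ pass through squares of one and the same column, which forces $p=p'$ (the $m(k_1-1)$ paths lie in distinct columns), so the claim reduces to the within-column statement that the local pieces in different rows are $Oy$-translates of one another, which is exactly how $p$ was assembled.

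Then I would deduce statement~2 from statement~1 by letting $Q$ and $Q'$ range over the rows. For each row $b$ write $Q_b$, $Q'_b$ for the squares of $p$, $p'$ in that row, at positions $(a,b)$ and $(a',b)$. Statement~1 gives $\Gamma(p')\cap Q'_b=T(\Gamma(p)\cap Q_b)$, and the crucial point is that the translation $T$ by $(\tfrac{a'-a}{m(k_1-1)},0)$ is \emph{independent of $b$}, depending only on the horizontal displacement of the two columns. Since $\Gamma(p)=\bigcup_b(\Gamma(p)\cap Q_b)$ and likewise for $p'$, taking the union over $b$ yields $\Gamma(p')=T(\Gamma(p))$, with $T$ as required. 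The statements for horizontal paths under an $\h$-pattern follow verbatim after exchanging the roles of rows and columns and of the $Ox$- and $Oy$-axes.

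The step that needs the most care is the uniformity claim, and specifically its validity for squares that are already black at level $k_1-1$: one must point back to the Proposition, where inside such a square the same black sub-squares $K(\A_{k_1})$ are deliberately selected, so that the template $\Pi$ really is common to all squares of level $k_1-1$ and the two path-pieces being compared differ by a genuine translation, with no spurious vertical component. Once that uniformity is in hand, both statements are immediate.
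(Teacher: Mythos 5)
Your proposal is correct and follows essentially the same route as the paper: it takes $\mathcal{P}$ to be exactly the $m(k_1-1)$ paths constructed column-by-column in Proposition~\ref{prop_v_or_h_paths}, and derives both statements from the translation-invariance of that construction (including the key point, which the paper also relies on, that inside black squares of level $k_1-1$ one deliberately selects the translated copy of $K(\A_{k_1})$ so that a common template exists). Your write-up merely makes explicit, via the template $\Pi$ and the row-by-row decomposition of the corridors, what the paper's proof asserts in two sentences.
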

\begin{proof}
Let ${\cal P}$ be the set of paths of level $k_3$ constructed for all columns of level $k_1-1$ as in the proof of Proposition \ref{prop_v_or_h_paths}. We can see that the intersection of the black squares of level $k_3$ belonging to a black vertical path of level $k_3$ in ${\cal P}$ with any square $Q$ of level $k_1-1$ is the translated image of the intersection of the black squares of level $k_3$ of any black path of level $k_3$ in ${\cal P}$ with any square $Q'$ of level $k_1 -1$. One can immediately see that $\frac{\alpha}{m(k_1-1)}$, with $1\le \alpha \le m(k_1 -1)-1$, is the distance between the left lower corners of any two distinct squares of level $k_1 -1$ that lie in the same row or column of level $k_1 -1$.
\end{proof}

\rem{\noindent} We call the paths occurring in Proposition~\ref{prop_diag_paths} \emph{diagonal paths of level} $k_3$ \emph{and type} $\done$, or, respectively, \emph{of type} $\dtwo$ in the unit square. Based on Proposition~\ref{prop_diag_paths} and the ideas of the above proof and the proof of Corollary~\ref{cor_period_v_or_h}, one can prove the following result.
\begin{corollary} (``Periodicity'' for diagonal patterns.)
\label{cor_period_diag}
Under the assumptions of Proposition \ref{prop_diag_paths}, there is a set ${\cal P}$ of diagonal paths of level $k_3$ and type $\done$ in $\Gs(\B_ {k_3})$ such that the following statements hold.
\begin{enumerate}
\item
If $p, p'\in {\cal P}$ and $Q, Q' \in \S_{m(k_1 -1)}$, then there exists a translation $T$ such that $\Gamma (p') \cap Q' = T(\Gamma (p) \cap Q)$.
\item
If $R \in V(\G(\B_{k_3}))$ lies on one of the diagonal paths of level $k_3$, then by translating $R$ parallel to the $Ox$-axis by a vector of length $\frac{\alpha}{m(k_1 -1)}$, $\alpha \in \mathbb{N}$,  to the left or to the right, we obtain a square $R'\in \B_{k_3}$ that either lies outside the unit square or on some other diagonal path.
\end{enumerate}
The analogous statements hold for diagonal paths of type $\dtwo$.
\end{corollary}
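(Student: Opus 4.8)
The plan is to follow the proof of Corollary~\ref{cor_period_v_or_h} almost line by line, replacing the single column of level $k_1-1$ by the diagonal strips of level-$(k_1-1)$ squares along which the paths of Proposition~\ref{prop_diag_paths} are assembled. I would take $\mathcal{P}$ to be the family of diagonal paths of level $k_3$ and type $\done$ furnished by that proposition, and I would use the fact, implicit in its construction, that such a path is built by concatenating, along a diagonal of the level-$(k_1-1)$ grid, translated copies of one fixed local corridor configuration $C_0$ of black squares of level $k_3$ sitting inside each level-$(k_1-1)$ square that the path meets. This uniformity is forced because the patterns $\A_{k_1},\dots,\A_{k_3}$ are applied to every white square of level $k_1-1$ through the affine maps $P_{W}$, and because, exactly as in the proof of Proposition~\ref{prop_v_or_h_paths}, the corner patterns $\A_{k_2}\in\cone$ and $\A_{k_3}\in\ctwo$ replace the diagonal-only contacts between consecutive black squares by genuine side-adjacencies, so that the diagonal segments in consecutive squares glue into a single path of $\Gs(\B_{k_3})$.

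The first assertion then follows as in Corollary~\ref{cor_period_v_or_h}. Because the corridor of $p$ meets each level-$(k_1-1)$ square $Q$ that $p$ passes through in the translate of the fixed configuration $C_0$, for any $p,p'\in\mathcal{P}$ and any squares $Q,Q'\in\S_{m(k_1-1)}$ met by $p$ and $p'$ respectively, the translation $T$ carrying $Q$ onto $Q'$ satisfies $\Gamma(p')\cap Q' = T(\Gamma(p)\cap Q)$; since $Q$ and $Q'$ have the same side length $1/m(k_1-1)$, this $T$ is a genuine translation, and when $p$ misses $Q$ the intersection is empty and the claim is trivial. In contrast to the vertical case, $T$ need not be parallel to a coordinate axis, because a diagonal path shifts both horizontally and vertically on passing from one level-$(k_1-1)$ square to the next; this is why the statement permits an arbitrary translation.

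For the second assertion I would use that a type-$\done$ path meets precisely those level-$(k_1-1)$ squares $S_{I,J}^{m(k_1-1)}$ with a fixed value of the offset $I-J$, distinct paths of $\mathcal{P}$ corresponding to distinct offsets. A horizontal translation by $\frac{\alpha}{m(k_1-1)}$, $\alpha\in\mathbb{N}$, is by an exact multiple of the side length of a level-$(k_1-1)$ square, hence carries the square $S_{I,J}^{m(k_1-1)}$ containing $R$ onto $S_{I\pm\alpha,J}^{m(k_1-1)}$ while preserving the position of $R$ relative to it. If $I\pm\alpha\notin\{0,\dots,m(k_1-1)-1\}$, the image $R'$ falls outside the unit square. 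Otherwise the target square lies on the diagonal of offset $(I-J)\pm\alpha\neq I-J$, hence on a different path $p''\in\mathcal{P}$; since the translation from $S_{I,J}^{m(k_1-1)}$ to $S_{I\pm\alpha,J}^{m(k_1-1)}$ is horizontal, it maps the copy of $C_0$ inside the first square onto the copy of $C_0$ inside the second, and by the uniform construction the latter is the actual corridor of $p''$ in that square and consists of black squares, so $R'\in\B_{k_3}$ and $R'$ lies on the other path $p''$.

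The main obstacle is exactly this diagonal bookkeeping in the second part: whereas in Corollary~\ref{cor_period_v_or_h} a horizontal shift sends a vertical path to a vertical path inside a neighbouring column, a horizontal shift of a diagonal path produces a \emph{parallel} diagonal with a different offset. One must therefore (i) insist that the shift be a whole number of level-$(k_1-1)$ cells, so that relative positions, and hence membership in $\B_{k_3}$, are preserved, (ii) check, invoking the $m(k_1-1)$ paths provided by Proposition~\ref{prop_diag_paths}, that each admissible shifted offset is realised by a path of $\mathcal{P}$, and (iii) dispose of the boundary case in which the shifted square leaves the grid. Finally, the statements for type-$\dtwo$ paths follow by applying the reflection across a vertical line, which interchanges the two diagonal directions while preserving horizontal translations, reducing them to the type-$\done$ case already treated.
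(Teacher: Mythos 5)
The paper itself offers no written proof of this corollary (it is merely asserted to follow from Proposition~\ref{prop_diag_paths} and the ideas of the proofs of Proposition~\ref{prop_v_or_h_paths} and Corollary~\ref{cor_period_v_or_h}), and your plan of transplanting the proof of Corollary~\ref{cor_period_v_or_h} is exactly the intended route. However, your execution has a genuine gap: both of your central structural claims --- that a type-$\done$ path meets only cells $S_{I,J}^{m(k_1-1)}$ with one fixed offset $I-J$, and that its corridor meets each such cell in a translate of a single configuration $C_0$ --- hold only when $\A_{k_1}$ is of type $\done$ via case~1 of the definition (a single component of $\G(\A_{k_1}^c)$ containing both corner squares $S_{0,0}^{m_{k_1}}$ and $S_{m_{k_1}-1,m_{k_1}-1}^{m_{k_1}}$). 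In case~2 the definition provides two components, $K_1$ joining the left column to the top row and $K_2$ joining the bottom row to the right column, whose exits match only across cell boundaries. A glued path must then alternate: a copy of $K_1$ in a cell of offset $d$, a copy of $K_2$ in the adjacent cell of offset $d-1$, then $K_1$ again, and so on. Hence each path straddles \emph{two} adjacent diagonals of cells, each diagonal of cells is shared by two different paths, and the cell corridors are copies of two configurations that need not be translates of one another: for instance, with $m_{k_1}=5$, taking $\A_{k_1}^c=K_1\cup K_2$ with $K_1=\{S_{0,3}^{5},S_{1,3}^{5},S_{1,4}^{5}\}$ and $K_2=\{S_{1,0}^{5},S_{2,0}^{5},S_{2,1}^{5},S_{3,1}^{5},S_{3,2}^{5},S_{4,2}^{5}\}$ satisfies the definition of $\done$ (case~2), yet $K_1$ and $K_2$ have $3$ and $6$ squares respectively, so no translation can match their refined corridors. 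This breaks your proof of assertion~1, which rests on all cell intersections being congruent to $C_0$, and it falsifies the bookkeeping in your proof of assertion~2 (``distinct paths of $\mathcal{P}$ corresponding to distinct offsets'').

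What survives: assertion~2 can still be obtained along your lines, since a horizontal shift by a whole number of cells carries a black level-$k_3$ square to the square in the same relative position of another cell; by the uniform application of the patterns (including the paper's convention, from the proof of Proposition~\ref{prop_v_or_h_paths}, of choosing the translated configuration inside cells that are already black at level $k_1-1$) that square is black and lies in the $K_1$- or $K_2$-copy of its cell, and every such copy is traversed by some path of the family, necessarily a different one because its pair of offsets differs. For assertion~1, though, you must split according to the two cases in the definition of $\done$: in case~1 your argument works, but in case~2 the translation claim is provable only for pairs of cells in which $p$ and $p'$ traverse copies of the \emph{same} component ($K_1$ in both, or $K_2$ in both) --- presumably the reading the authors intend --- and not for arbitrary $Q$, $Q'$ as you state it. Two smaller points: if $p$ misses $Q$ while $p'$ meets $Q'$, the claim is not ``trivial'' but impossible (no translation maps $\emptyset$ onto a nonempty set), so the passing-through hypothesis must be made explicit; and your reflection argument for $\dtwo$ is fine only after noting that the reflection exchanges $\cone$ and $\ctwo$, so the roles of $k_2$ and $k_3$ must be swapped when invoking the hypotheses.
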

\begin{proposition} (``Parallel'' vertical curves for vertical patterns.)
\label{prop_paral_vert_curves}
Under the assumptions of Proposition \ref{prop_v_or_h_paths} let $\A_{k_1} \in \v$. Then there exists a set $\tilde{\v} (\A_{k_1})$  of curves that connect the top and the bottom side of the unit square with the following properties:
\begin{enumerate}
\item
If $\pi \in \tilde{\v} (\A_{k_1})$ and $Q, Q' \in \S_{m(k_1 -1)}$ lie in the same column of level $k_1 -1$ such that $\pi \cap Q \ne \emptyset$ and $\pi \cap Q' \ne \emptyset$,  then there exists a translation $T$ by a vector of length $\frac{\alpha}{m(k_1 -1)}$, $\alpha \in \mathbb{N},$ parallel to the $Oy$-axis, such that $\pi \cap Q' = T(\pi \cap Q)$.
\item
If $\pi, \pi ' \in \tilde{\v} (\A_{k_1})$ and $Q, Q' \in \S_{m(k_1 -1)}$ lie in the same row of level $k_1 -1$ such that $\pi \cap Q \ne \emptyset$ and $\pi \cap Q' \ne \emptyset$,  then there exists a translation $T$ by a vector of length $\frac{\alpha}{m(k_1 -1)}$, $\alpha \in \mathbb{N},$ parallel to the $Ox$-axis, such that $\pi \cap Q' = T(\pi \cap Q)$.
\item
If $\pi, \pi ' \in \tilde{\v} (\A_{k_1})$, then there exists a translation $T$  by a vector of length $\frac{\alpha}{m(k_1-1)}$, $\alpha   \in \mathbb{N}$,   parallel to the $Ox$-axis, such that $\pi ' = T (\pi)$.
\item
If $\pi \in \tilde{\v} (\A_{k_1})$, then it is contained in a column of level $k_1 -1$.  
\end{enumerate}
\end{proposition}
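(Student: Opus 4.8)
The plan is to thread a continuous curve through the corridor of each of the $m(k_1-1)$ vertical paths of level $k_3$ supplied by Proposition~\ref{prop_v_or_h_paths}, and then to transport the periodicity of the corridors recorded in Corollary~\ref{cor_period_v_or_h} to the curves themselves. Let $\mathcal{P}=\{p_1,\dots,p_{m(k_1-1)}\}$ be the set of paths of Corollary~\ref{cor_period_v_or_h}, one running through each column of level $k_1-1$. For a path $p=\{S_1,\dots,S_r\}\in\mathcal{P}$, with $S_1$ in the top row and $S_r$ in the bottom row of $\S_{m(k_3)}$, I would build $\pi(p)$ as the polygonal curve obtained by joining the centre of $S_i$ to the centre of $S_{i+1}$ by a straight segment for $1\le i\le r-1$, and then joining the centre of $S_1$ vertically up to the top side and the centre of $S_r$ vertically down to the bottom side of the unit square. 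Since consecutive squares of a path in $\Gs(\B_{k_3})$ share a full side, each segment between centres crosses that side at its midpoint and hence lies in $S_i\cup S_{i+1}\subseteq\Gamma(p)$; the two terminal segments lie in $S_1$, resp.\ $S_r$, whose top, resp.\ bottom, side lies on the boundary of the unit square. Thus $\pi(p)$ is a continuous curve from the top to the bottom side with $\pi(p)\subseteq\Gamma(p)$, and I set $\tilde{\v}(\A_{k_1}):=\{\pi(p):p\in\mathcal{P}\}$.

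Property~4 is then immediate, because Proposition~\ref{prop_v_or_h_paths} places $\Gamma(p)$ inside the column $C$ of level $k_1-1$ through which $p$ runs, so $\pi(p)\subseteq\Gamma(p)\subseteq C$. The remaining properties rest on the fact that the recipe for $\pi(p)$ is \emph{intrinsic} to the corridor: it uses only centres of the black squares of level $k_3$ and straight segments, both of which are preserved by translations. Hence any translation $T$ that carries a corridor piece $\Gamma(p)\cap Q$ onto $\Gamma(p')\cap Q'$ automatically carries $\pi(p)\cap Q$ onto $\pi(p')\cap Q'$. Feeding this into Corollary~\ref{cor_period_v_or_h} gives the three periodicity statements at once: the column case of part~(1) produces, for $Q,Q'$ in one column met by a single $\pi=\pi(p)$, a vertical translation $T$ of length $\frac{\alpha}{m(k_1-1)}$ with $\pi\cap Q'=T(\pi\cap Q)$ (Property~1); the row case of part~(1) produces, for $Q,Q'$ in one row met by $\pi=\pi(p)$ and $\pi'=\pi(p')$ respectively, a horizontal translation realising $\pi'\cap Q'=T(\pi\cap Q)$ (Property~2); and part~(2) produces a horizontal translation $T$ with $\Gamma(p')=T(\Gamma(p))$, hence $\pi'=T(\pi)$ (Property~3).

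I expect the main work to lie in verifying intrinsicality cleanly, that is, in checking that the local curve pieces glue into one curve and simultaneously respect the vertical translations of Property~1. Concretely, one must confirm that the point at which $\pi(p)$ leaves one square of level $k_1-1$ coincides with the point at which it enters the square below; this holds because, in the inductive construction of $p$, the bottom black square $R$ of level $k_3$ in the upper square and the top black square of level $k_3$ in the lower square share a side, which the polygonal curve crosses at its common midpoint. The same uniformity — the pattern $\A_{k_1}$, and then every pattern up to level $k_3$, being applied identically inside each white square of level $k_1-1$ — makes the set of black squares of level $k_3$ inside each level-$(k_1-1)$ square an exact translate of one template, so that the terminal vertical segment that reaches the top side of the unit square in the top-most square of $C$ is precisely the translate of the entry segment of every lower square. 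With these matchings in place, each of Properties~1--3 reduces to the corresponding corridor identity of Corollary~\ref{cor_period_v_or_h} together with the translation-equivariance of the recipe.
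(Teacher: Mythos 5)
Your proposal is correct and takes essentially the same route as the paper: the paper also builds polygonal curves through the centres of the squares of the black paths of level $k_3$ (with terminal segments to the midpoints of the top and bottom sides), and transfers the periodicity of Proposition~\ref{prop_v_or_h_paths} and Corollary~\ref{cor_period_v_or_h} to these curves by translation-equivariance of the construction. The only cosmetic difference is that the paper first replaces each black path by a \emph{minimal} sub-path from the top row to the bottom row of $\S_{m(k_3)}$, so that the top-row and bottom-row squares serving as endpoints are unique; your designation of the endpoint squares $S_1$ and $S_r$ of the full path achieves the same effect.
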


\begin{proof}
We consider, for each path in $\G^s(\B_{k_3})$ constructed in the proof of Proposition \ref{prop_v_or_h_paths} a minimal path $p^{min}$ from the top row to the bottom row of $\S_{m(k_3)}$, such that the $m(k_1 -1)$ minimal paths have the properties stated in Corollary \ref{cor_period_v_or_h}. Let now $p$ be such a path and $p^{min}$ the corresponding minimal sub-path. By definition, $p^{min}$ contains exactly one square $Q^{t} (p)$ of $V(\G^s(\B_{k_3}))$ that lies in the top row of level $k_3$ and exactly one square $Q^{b} (p)$ of $V(\G^s(\B_{k_3}))$ that lies in the bottom row of level $k_3$. Now we construct a curve inside the corridor of the path $p^{min}$. Let $p^{min}$ be the sequence $\{Q_r\}_{r=1}^{l}$ of squares of $V(\G^s(\B_{k_3}))$, such that $Q_1=Q^{t}(p)$ and $Q_{l}=Q^{b}(p)$. For $r= 1,\dots,l$ let $c(Q_r)$ be the centre of the square $Q_r$. Let $c^t(Q_1)$ be the midpoint of the top side of $Q_1$ and $c^b(Q_r)$ be the midpoint of the bottom side of $Q_r$. We construct a curve $\pi $ inside the corridor of $p^{min}$ by taking the union of the line segments $[c^t(Q_1), c(Q_1)]$, $[c(Q_1), c(Q_2)]$, $\dots$, $[c(Q_{r-1}), c(Q_{r})]$, $[c(Q_{r}), c^b (Q_r)]$. One can check, by applying Proposition \ref{prop_v_or_h_paths} and Corollary \ref{cor_period_v_or_h},
 that the family of curves, obtained by the above construction, has the properties stated in Proposition \ref{prop_paral_vert_curves}. 
\end{proof}
We note that the analogon of Proposition \ref{prop_paral_vert_curves} holds for patterns of type $\h$. With a construction idea analogous to that of the curves in the proof of Proposition
\ref{prop_paral_vert_curves} one can prove the following result. 
\begin{proposition} (``Parallel diagonal curves for diagonal patterns''.)
\label{prop_paral_diag_curves}
Under the assumptions of Proposition \ref{prop_diag_paths} let $\A_{k_1} \in \done$. Then there exists a set $\tilde{\done}(\A_{k_1})$ of curves that connect the left and the top side or the bottom and the right side of the unit square with the following properties:
\begin{enumerate}
\item
If $Q, Q' \in \S_{m(k_1-1)}$ lie in the same row (column) of level $k_1 -1$, then there exists a translation $T$ by a vector of length $\frac{\alpha}{m(k_1 -1)}$, $\alpha \in \mathbb{N},$ parallel to the $Ox$- ($Oy$)-axis, such that 
\[\{Q '\cap \pi '\mid \pi '\in \tilde{\done} (\A _{k_1}) \}= T \left( \{Q \cap \pi \mid \pi\in \tilde{\done} (\A _{k_1}) \} \right)  
\]
\item
If $\pi,\pi ' \in \tilde{\done}(\A_{k_1})$, then there exists a translation $T$  by a vector of length $\frac{\alpha}{m(k_1-1)}$, $\alpha \in \mathbb{N}$, parallel to the $Ox$-axis, such that either $\pi ' \subset T (\pi) $ or $T (\pi) \subset \pi '$.
\item
If $\pi,\pi ' \in \tilde{\done}(\A_{k_1})$, then there exists a translation $T$  by a vector of length $\frac{\alpha}{m(k_1-1)}$, $\alpha \in \mathbb{N}$, parallel to the $Oy$-axis, such that either $\pi ' \subset T (\pi)$ or $T(\pi) \subset \pi '$.
\end{enumerate}
\end{proposition}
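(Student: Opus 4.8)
The plan is to mirror the construction in the proof of Proposition \ref{prop_paral_vert_curves}, replacing the vertical paths by the diagonal paths of level $k_3$ and type $\done$ supplied by Proposition \ref{prop_diag_paths}, and replacing the use of Corollary \ref{cor_period_v_or_h} by Corollary \ref{cor_period_diag}. By Proposition \ref{prop_diag_paths}, there are $m(k_1-1)$ paths in $\Gs(\B_{k_3})$ joining the left column of $\S_{m(k_3)}$ to its top row, and $m(k_1-1)$ paths joining the bottom row to the right column. For each such path I would first pass to a minimal sub-path $p^{min}$, chosen so that the whole collection of minimal paths inherits the periodicity properties of Corollary \ref{cor_period_diag}. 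Then, for each minimal path $p^{min}=\{Q_r\}_{r=1}^{l}$, I would build a polygonal curve $\pi$ inside its corridor exactly as before: starting at the midpoint of the boundary side of $Q_1$ lying on the left column (or bottom row), passing through the centres $c(Q_1),\dots,c(Q_l)$, and ending at the midpoint of the boundary side of $Q_l$ lying on the top row (or right column). The resulting set $\tilde{\done}(\A_{k_1})$ then splits into the two sub-families required by the statement, one joining the left and top sides of the unit square and one joining the bottom and right sides.

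Property 1 would follow directly from part 1 of Corollary \ref{cor_period_diag}: the intersection of the black squares of a diagonal path with a square $Q$ of level $k_1-1$ is a translate of the corresponding intersection for any other such square, and since the curve is built canonically from these squares (via their midpoints and centres), the curve-intersections translate in the same way. As in Corollary \ref{cor_period_v_or_h}, the admissible translation lengths $\frac{\alpha}{m(k_1-1)}$ are exactly the distances between lower-left corners of squares of level $k_1-1$ lying in a common row or column.

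The main obstacle, and the feature that distinguishes this statement from Proposition \ref{prop_paral_vert_curves}, is establishing the \emph{containment} relations in parts 2 and 3 rather than plain equality of translates. Because the diagonal curves have different lengths, depending on how close they run to the main diagonal and on which of the two sub-families they belong to, a horizontal (resp.\ vertical) translation by $\frac{\alpha}{m(k_1-1)}$ does not map a curve onto another curve, but onto a sub-curve of one, or contains one as a sub-curve. To handle this I would invoke part 2 of Corollary \ref{cor_period_diag}: translating any square $R$ lying on a diagonal path parallel to the $Ox$-axis (resp.\ $Oy$-axis) by $\frac{\alpha}{m(k_1-1)}$ yields a square that either leaves the unit square or lands on another diagonal path. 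Applying this square by square to the squares underlying a curve $\pi$ shows that the squares of $T(\pi)$ surviving inside the unit square form a subset of the squares of some $\pi'$, whence $T(\pi)\subset\pi'$; interchanging the roles of $\pi$ and $\pi'$ according to which is the longer curve gives the stated dichotomy $\pi'\subset T(\pi)$ or $T(\pi)\subset\pi'$, and likewise for the $Oy$-direction in part 3.

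The remaining care, which I expect to be routine but must be stated, is to check that the polygonal curves fit together compatibly along shared square sides, so that the containments established at the level of the underlying square sets lift to genuine containments of the curves themselves; this follows because the curve is assembled from midpoints and centres that are preserved by the relevant translations, exactly as in the verification at the end of the proof of Proposition \ref{prop_paral_vert_curves}.
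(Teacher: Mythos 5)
Your proposal is correct and follows exactly the route the paper intends: the paper gives no explicit proof of this proposition, stating only that it follows ``with a construction idea analogous to that of the curves in the proof of Proposition \ref{prop_paral_vert_curves}'', using Proposition \ref{prop_diag_paths} and Corollary \ref{cor_period_diag} in place of Proposition \ref{prop_v_or_h_paths} and Corollary \ref{cor_period_v_or_h}. Your write-up fills in precisely that analogy (minimal sub-paths, polygonal curves through centres and side midpoints, and the containment dichotomy in parts 2 and 3 derived from part 2 of Corollary \ref{cor_period_diag}), so it matches---and in fact elaborates on---the paper's intended argument.
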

\section{Totally disconnected generalised Sierpi\'nski carpets}
\label{sec:totally_disconnected}
\begin{lemma}
\label{lemma_tiles_v_and_h}
Let $\linf$ be a generalised carpet defined by a sequence of patterns $\{\A_k\}_{k=1}^{\infty}$ with width-sequence $\{m_k\}_{k=1}^{\infty}$. Let $1 < k_1 < k_2, k_3$,  and $k_1< k_4 < k_5, k_6$  such that 
\begin{enumerate}
\item
 $\A_{k_1} \in \v $, $\A_{k_2} \in \cone$ and $\A_{k_3} \in \ctwo$,
\item
 $\A_{k_4} \in \h $, $\A_{k_5} \in \cone$ and $\A_{k_6} \in \ctwo$.
\end{enumerate}
Then, for any two points $t=(t_1, t_2), z=(z_1, z_2)$ lying in the same connected component of $L_{k_6},$
\[ |t_1 - z_1| \le \frac{2}{m(k_1 -1)} \text{ and } |t_2 - z_2| \le \frac{2}{m(k_4 -1)}.
\]
\end{lemma}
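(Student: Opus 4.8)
The plan is to reduce the statement to a separation argument. I would manufacture two families of curves that cut the unit square --- one family of \emph{vertical} curves running from the top to the bottom side and one family of \emph{horizontal} curves running from the left to the right side --- each curve disjoint from $L_{k_6}$, and then argue that a connected component of $L_{k_6}$ is trapped between consecutive curves of each family. Concretely, applying Proposition~\ref{prop_v_or_h_paths} and Proposition~\ref{prop_paral_vert_curves} to $\A_{k_1}\in\v$, using corner patterns of type $\cone$ and $\ctwo$ situated at levels $\le k_6$, I obtain a family $\tilde{\v}(\A_{k_1})$ containing one curve $\pi_i$ in each column of level $k_1-1$, joining the top and bottom sides of the unit square and contained in that column. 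The analogous construction for $\A_{k_4}\in\h$ yields one horizontal curve $\sigma_j$ in each row of level $k_4-1$. The first point to verify is that every such curve avoids $L_{k_6}$: by its construction in the proof of Proposition~\ref{prop_paral_vert_curves} the curve passes only through centres of black squares of level $k_6$ and midpoints of their shared sides (together with the midpoints of two boundary sides), and each of these points lies in no white square of level $k_6$, hence not in $L_{k_6}$.

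Next I would isolate the key separation property. Fix a vertical curve $\pi_i$, contained in the column $C_i=[\tfrac{i}{m(k_1-1)},\tfrac{i+1}{m(k_1-1)}]\times[0,1]$ and joining its top and bottom edges. I claim that if $u,v\in L_{k_6}$ satisfy $u_1<\tfrac{i}{m(k_1-1)}$ and $v_1>\tfrac{i+1}{m(k_1-1)}$, then $u$ and $v$ lie in distinct connected components of $L_{k_6}$. Indeed, if $\gamma$ were a path in $L_{k_6}$ from $u$ to $v$, then, tracing $\gamma$ from the last moment at which its first coordinate equals $\tfrac{i}{m(k_1-1)}$ up to the first subsequent moment at which it equals $\tfrac{i+1}{m(k_1-1)}$, I extract a connected subarc of $\gamma$ lying in $C_i$ and meeting both the left and right edges of $C_i$. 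By the standard lemma that, in a rectangle, any connected set meeting one pair of opposite edges must meet any connected set meeting the other pair, this subarc has to intersect $\pi_i$, contradicting $\pi_i\cap L_{k_6}=\emptyset$.

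With the separation property in hand, the bound on $|t_1-z_1|$ is a short counting argument. Suppose $t,z$ lie in one component of $L_{k_6}$ and, without loss of generality, $t_1\le z_1$. If we had $z_1-t_1>\tfrac{2}{m(k_1-1)}$, then the open interval $(m(k_1-1)\,t_1,\; m(k_1-1)\,z_1-1)$ would have length $m(k_1-1)(z_1-t_1)-1>1$ and hence contain an integer $i$, which one checks satisfies $0\le i\le m(k_1-1)-1$; for this $i$ one gets $t_1<\tfrac{i}{m(k_1-1)}$ and $z_1>\tfrac{i+1}{m(k_1-1)}$, so the curve $\pi_i$ (present because every column of level $k_1-1$ carries one) separates $t$ from $z$, a contradiction. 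Thus $|t_1-z_1|\le\tfrac{2}{m(k_1-1)}$; here the factor $2$ is exactly the slack caused by the curve wiggling within its own column. The inequality $|t_2-z_2|\le\tfrac{2}{m(k_4-1)}$ then follows verbatim, using the horizontal curves $\sigma_j$ in the rows of level $k_4-1$.

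The step I expect to be the main obstacle is making the separation fully rigorous rather than merely ``visually obvious''. Two matters need care: the topological crossing lemma in the rectangle, which I would either cite or deduce from the Jordan curve theorem, and the verification that the endpoints of each curve, which sit on the boundary of the unit square, neither belong to $L_{k_6}$ nor secretly reconnect the two sides --- this is controlled because those endpoints are interior points of sides of black squares of level $k_6$. A secondary bookkeeping point, which I would address at the outset, is to guarantee that the corner patterns used to build the vertical and horizontal curves occur at levels $\le k_6$, so that the black squares carrying the curves are already black at level $k_6$ and their interiors are therefore disjoint from $L_{k_6}$; this is precisely the role played by the hypotheses supplying patterns of type $\cone$ and $\ctwo$ above $k_1$ and above $k_4$.
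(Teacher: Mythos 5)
Your proof is correct and follows essentially the same route as the paper's: the paper likewise applies Proposition~\ref{prop_paral_vert_curves} (and its analogue for $\h$) to a column of level $k_1-1$ (respectively a row of level $k_4-1$) lying between $t$ and $z$, obtaining a vertical (horizontal) curve disjoint from the carpet that must cross the path joining $t$ to $z$ inside $L_{k_6}$, the contradiction coming from the same crossing lemma you invoke (the paper cites Maehara, Lemma~2). Your extra details --- checking that the curves avoid $L_{k_6}$ and the explicit integer-counting step yielding the factor $2$ --- are expansions of steps the paper leaves implicit.
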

\begin{proof}
Let $\Omega_{k_6}(t,z)$ be the connected component in $L_{k_6}$ that contains $t$ and $z$. We give a proof by contradiction. We assume that there is a column $C$ of level $k_1-1$ between $t$ and $z$. As $\Omega_{k_6}(t,z)$ is a finite union of squares, it is path-connected. Thus, there is a curve $c$ from $t$ to $z$ in $\Omega_{k_6}(t,z)$. Let $C'$ denote the rectangle that is the union of all squares of level $k_1 -1$ that belong to $C$. $c\cap C'$ is a curve from the left side of $C'$ to the right side of $C'$. 

From Proposition \ref{prop_paral_vert_curves} it follows that there exists a curve $\pi \in \tilde{\v}(\A_{k_6})$ such that $\pi$ is in $C'$ and leads from the top side of $C'$ to the bottom side of $C'$. We have $c\subseteq L_{\infty}$ and $\pi\subseteq[0,1]\times[0,1]\setminus L_{\infty}$, which is a contradiction to a known result, see e.g., Maehara\cite[Lemma~2]{Maehara}. We obtain that $t$ and $z$ must lie within two consecutive columns of level $k_1-1$, and therefore
$|t_1-z_1| \le \frac{2}{m(k_1 -1)}$. 
Using an analogon of Proposition \ref{prop_paral_vert_curves} for patterns of type $\h$ and the same arguments as before we infer $|t_2 - z_2| \le \frac{2}{m(k_4 -1)}$.
\end{proof}
\begin{lemma}
\label{lemma_tiles_diag_and_h}
Let $\linf$ be a generalised carpet defined by a sequence of patterns $\{\A_k\}_{k=1}^{\infty}$ with width-sequence $\{m_k\}_{k=1}^{\infty}$. Let $1 < k_1 < k_2, k_3$,  and $k_1< k_4 < k_5, k_6$  such that 
\begin{enumerate}
\item
 $\A_{k_1} \in \done \cup \dtwo $, $\A_{k_2} \in \cone$ and $\A_{k_3} \in \ctwo$,
\item
 $\A_{k_4} \in \h\cup\v $,$\A_{k_5} \in \cone$ and $\A_{k_6} \in \ctwo$.
\end{enumerate}
Then, for any two points $t=(t_1, t_2), z=(z_1, z_2)$ lying in the same connected component of $L_{k_6}$,
\[ |t_1 - z_1| \le \frac{4}{m(k_1 -1)} \text{ and } |t_2 - z_2| \le \frac{2}{m(k_1 -1)}.
\]
\end{lemma}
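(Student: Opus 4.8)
The plan is to follow the blueprint of the proof of Lemma~\ref{lemma_tiles_v_and_h}, but to replace its two axis-parallel families of separating curves by one diagonal family coming from $\A_{k_1}$ and one axis-parallel family coming from $\A_{k_4}$, arguing by contradiction through the crossing lemma of Maehara~\cite{Maehara}. First I would fix the connected component $\Omega_{k_6}(t,z)$ of $L_{k_6}$ containing $t$ and $z$; being a finite union of closed squares it is path-connected, so there is a curve $c$ from $t$ to $z$ with $c\subseteq\Omega_{k_6}(t,z)\subseteq L_{k_6}$. Every separating curve produced below will be shown to lie in $[0,1]\times[0,1]\setminus L_{k_6}$ --- it runs through the corridor of a black path of sufficiently high level, exactly as the vertical curve $\pi$ does in the proof of Lemma~\ref{lemma_tiles_v_and_h} --- so that $c$ can meet none of them.

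The diagonal estimate comes first. Assume $\A_{k_1}\in\done$; the case $\A_{k_1}\in\dtwo$ is obtained by the reflection $x\mapsto 1-x$. By Proposition~\ref{prop_paral_diag_curves} the pattern $\A_{k_1}$ yields a family $\tilde{\done}(\A_{k_1})$ of curves of slope $+1$, each joining the left side to the top side or the bottom side to the right side of the unit square, and by properties~2 and~3 of that proposition (together with Corollary~\ref{cor_period_diag}) this family is periodic in both coordinate directions with period $\frac{1}{m(k_1-1)}$. A curve joining the left and top sides cuts off the points with small $x-y$, and a curve joining the bottom and right sides cuts off the points with large $x-y$; hence, if $|(t_1-z_1)-(t_2-z_2)|$ were to exceed $\frac{2}{m(k_1-1)}$, some member $\pi\in\tilde{\done}(\A_{k_1})$ would have an $x-y$-value strictly between those of $t$ and $z$ and would run fully across $[0,1]\times[0,1]$, thereby separating $t$ from $z$. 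As $\pi\subseteq[0,1]\times[0,1]\setminus L_{k_6}$ while $c\subseteq L_{k_6}$, the curves $c$ and $\pi$ are disjoint, contradicting the crossing lemma. This yields $|(t_1-z_1)-(t_2-z_2)|\le\frac{2}{m(k_1-1)}$ (and $|(t_1-z_1)+(t_2-z_2)|\le\frac{2}{m(k_1-1)}$ in the $\dtwo$ case).

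The axis-parallel estimate is obtained the same way from $\A_{k_4}$. Taking $\A_{k_4}\in\h$, the horizontal curves furnished by the $\h$-analogue of Proposition~\ref{prop_paral_vert_curves} separate any two points of $L_{k_6}$ whose second coordinates differ by more than $\frac{2}{m(k_4-1)}$, so the same contradiction gives $|t_2-z_2|\le\frac{2}{m(k_4-1)}\le\frac{2}{m(k_1-1)}$, the last inequality holding because $k_4>k_1$ and $m_k\ge 2$; the case $\A_{k_4}\in\v$ is the mirror statement across the main diagonal and bounds $|t_1-z_1|$ instead. Writing $t_1-z_1=(t_2-z_2)+\bigl((t_1-z_1)-(t_2-z_2)\bigr)$ and inserting the two bounds then gives, by the triangle inequality, $|t_1-z_1|\le\frac{2}{m(k_1-1)}+\frac{2}{m(k_1-1)}=\frac{4}{m(k_1-1)}$, while the direct estimate supplies $|t_2-z_2|\le\frac{2}{m(k_1-1)}$; the choice $\done$/$\dtwo$ and $\h$/$\v$ only permutes the roles of the two coordinates in this final step.

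I expect the real work to lie in the two geometric checks behind the crossing argument, not in the arithmetic. The first is that a diagonal curve $\pi$ at the required $x-y$-level actually exists and runs uninterrupted from side to side so as to separate the prescribed points: unlike the vertical barrier of Lemma~\ref{lemma_tiles_v_and_h}, the members of $\tilde{\done}(\A_{k_1})$ only cut off corners, so one must use the periodicity of Corollary~\ref{cor_period_diag} to locate a full separating curve at the right position. The second is that this $\pi$ really lies in $[0,1]\times[0,1]\setminus L_{k_6}$; since $\pi$ is threaded through the corridor of a black diagonal path produced from $\A_{k_1}$ together with the corner patterns $\A_{k_2}\in\cone$ and $\A_{k_3}\in\ctwo$ via Proposition~\ref{prop_diag_paths}, this forces one to take that black path at a level at least $k_6$, which is the technical point one must verify is available. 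Everything else is the bookkeeping of the four sub-cases.
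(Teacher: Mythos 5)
Your proof skeleton (path-connected component, a curve $c\subseteq L_{k_6}$, separating curves threaded through black corridors, contradiction via Maehara\cite{Maehara}) is the same as the paper's, and your horizontal estimate $|t_2-z_2|\le\frac{2}{m(k_4-1)}\le\frac{2}{m(k_1-1)}$ is sound. The genuine gap is in your diagonal estimate. You claim that if $|(t_1-z_1)-(t_2-z_2)|>\frac{2}{m(k_1-1)}$, then some $\pi\in\tilde{\done}(\A_{k_1})$ has its $x-y$-value strictly between those of $t$ and $z$ and therefore separates them. But the members of $\tilde{\done}(\A_{k_1})$ are not level sets of $x-y$: each one runs through a staircase corridor of level-$(k_1-1)$ squares $S^{m(k_1-1)}_{i,j}$ with $j-i\in\{d,d+1\}$, and inside each such square the black cut (hence the curve) may wander arbitrarily close to either the bottom-right or the top-left corner, since the definition of $\done$ only prescribes which sides the components $K_1,K_2$ must reach. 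Consequently the $x-y$-values along a single curve sweep an interval of length up to $\frac{3}{m(k_1-1)}$, while consecutive curves are only $\frac{1}{m(k_1-1)}$ apart. A curve is guaranteed to lie between $t$ and $z$ (and hence separate them) only when its whole band does, which requires $|(t_1-z_1)-(t_2-z_2)|$ to exceed roughly $\frac{4}{m(k_1-1)}$, not $\frac{2}{m(k_1-1)}$. With the bound you can actually justify, your concluding triangle inequality gives only $|t_1-z_1|\le\frac{5}{m(k_1-1)}$ (or $\frac{6}{m(k_1-1)}$), missing the stated constant $\frac{4}{m(k_1-1)}$.

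The paper avoids this by ordering the steps differently and arguing locally instead of by a triangle inequality: it first uses the $\h$-curves (at the finer level $k_4-1$) to confine the \emph{whole} curve $c$ to two consecutive rows $R_1,R_2$ of level $k_1-1$; then, assuming at least three columns of level $k_1-1$ lie between $t$ and $z$, it notes that inside the rectangle formed by those columns and the two rows the periodic family of diagonal curves contains one crossing from the top side of that rectangle to its bottom side (a $\done$-staircase traverses a two-row strip within three columns), which must meet $c$ --- the desired contradiction; hence at most two columns separate $t$ and $z$, i.e.\ $|t_1-z_1|\le\frac{4}{m(k_1-1)}$. Your argument is repairable along the same lines if you replace the triangle inequality by a count of square indices: the horizontal bound forces the level-$(k_1-1)$ rows of $t$ and $z$ to differ by at most $1$, confinement between consecutive diagonal curves forces $j-i$ to differ by at most $2$, hence the columns differ by at most $3$. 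Two further remarks: the technical point you flag about the level of the black path is real but immediately available --- apply Proposition~\ref{prop_diag_paths} and Proposition~\ref{prop_paral_diag_curves} to the triple $(k_1,k_2,k_6)$, legitimate since $\A_{k_6}\in\ctwo$ and $k_1<k_2,k_6$, so the diagonal curves avoid $L_{k_6}$; and the coordinate swap arising in the $\A_{k_4}\in\v$ sub-case afflicts the paper's ``without loss of generality'' equally, so I do not count it against you.
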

\begin{proof} We assume without loss of generality that $\A_{k_1} \in \done$ and $\A_{k_4} \in \h$. Let $\Omega_{k_6}(t,z)$ be the connected component in $L_{k_6}$ that contains $t$ and $z$. As $\Omega_{k_6}(t,z)$ is a finite union of squares, it is path-connected. Thus, there is a curve $c$ from $t$ to $z$ in $\Omega_{k_6}(t,z)$. Similarly as in Lemma~\ref{lemma_tiles_v_and_h}, we obtain that $c$ must lie within two consecutive rows $R_1$ and $R_2$ of level $k_1-1$, where $R_1$ is the upper row. Therefore, $|t_2-z_2| \le \frac{2}{m(k_1 -1)}$.

Now, we indirectly assume that between $t$ and $z$ there are at least $3$ columns of level $k_1-1$, whose union we denote by $C$. Let $Q=C\cap(R_1\cup R_2)$, and $Q'$ be the union of the squares of level $k_1-1$ in $Q$, i.e., $Q'$ is a rectangle. $c\cap Q'$ is a curve from the left side of $Q'$ to the right side of $Q'$.

We denote by $R_1 '$ and $R_2 '$ be the union of the squares of level $k_1 -1$ in $R_1$ and $R_2$, respectively.
As we know that $t$ and $z$ lie within $R_1$ and $R_2$, it follows from the definition of $\done$ and $\tilde \done$ that there is a curve $\pi$ in $Q'$ that leads from the top side of $R_1 '$ to the bottom side of $R_2 '$. Since $c\subseteq L_{\infty}$ and $\pi\subseteq[0,1]\times[0,1]\setminus L_{\infty}$, we have a contradiction to  Maehara\cite[Lemma~2]{Maehara}. Therefore, $t$ and $z$ must be contained in the union of at most $4$ consecutive columns of level $k_1-1$, which gives $|t_1-z_1| \le \frac{4}{m(k_1 -1)}$.
\end{proof}
\begin{lemma}
\label{lemma_tiles_diag_and_diag}
Let $\linf$ be a generalised carpet defined by a sequence of patterns $\{\A_k\}_{k=1}^{\infty}$ with width-sequence $\{m_k\}_{k=1}^{\infty}$. Let $1 < k_1 < k_2, k_3$  and $ k_4 < k_5, k_6$  such that 
\begin{enumerate}
\item
$\A_{k_1}\in \done $ and $\A_{k_4} \in \dtwo $, 
\item
$\A_{k_2} \in \cone$ and $\A_{k_3} \in \ctwo$,
\item
$\A_{k_5} \in \cone$ and $\A_{k_6} \in \ctwo$.
\end{enumerate}
Then, for any two points $t=(t_1, t_2), z=(z_1, z_2)$ lying in the same connected component of $L_{k_6}$,
\[ |t_1 - z_1| \le \frac{3}{m(k -1)} \text{ and } |t_2 - z_2| \le \frac{3}{m(k -1)}, ~~\text{ where }k=\min(k_1,k_4).
\]
\end{lemma}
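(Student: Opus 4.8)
The plan is to run the contradiction-plus-crossing scheme of Lemmas \ref{lemma_tiles_v_and_h} and \ref{lemma_tiles_diag_and_h}, but to use the two diagonal families \emph{simultaneously}: the family coming from $\A_{k_1}\in\done$ will control the quantity $x-y$, and the family coming from $\A_{k_4}\in\dtwo$ will control $x+y$. First I would let $\Omega=\Omega_{k_6}(t,z)$ be the connected component of $L_{k_6}$ containing $t$ and $z$. Being a finite union of closed squares, $\Omega$ is path-connected, so there is a curve $c$ from $t$ to $z$ with $c\subseteq\Omega\subseteq L_{k_6}$. Assuming without loss of generality $\A_{k_1}\in\done$ and $\A_{k_4}\in\dtwo$, I would then invoke Proposition \ref{prop_diag_paths}, Proposition \ref{prop_paral_diag_curves} and Corollary \ref{cor_period_diag} for $\A_{k_1}$, and their $\dtwo$-analogues for $\A_{k_4}$, taking the corner patterns at levels not exceeding $k_6$ (the hypotheses supply $\cone$ and $\ctwo$ patterns above $\min(k_1,k_4)$, and we realise the barriers at some level $\ell\le k_6$). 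This produces two families of curves, $\tilde{\done}(\A_{k_1})$ and $\tilde{\dtwo}(\A_{k_4})$, the first parallel to the main diagonal (each curve joining the left and top, or the bottom and right, side of the unit square), the second parallel to the antidiagonal. Since every such curve runs inside a corridor of black squares of a level $\ell\le k_6$, both families lie in $[0,1]\times[0,1]\setminus L_{k_6}$ and are therefore disjoint from $c$.

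The heart of the proof is a crossing estimate in each diagonal direction. Put $u=x-y$. By the periodicity recorded in Corollary \ref{cor_period_diag} and Proposition \ref{prop_paral_diag_curves}, the curves of $\tilde{\done}(\A_{k_1})$ are translates of one another by multiples of $\frac{1}{m(k_1-1)}$ parallel to the axes, so they occupy consecutive diagonal strips whose successive $u$-levels differ by a multiple of $\frac{1}{m(k_1-1)}$. If $u_t$ and $u_z$ differed by more than the width spanned by such a strip, one curve $\pi\in\tilde{\done}(\A_{k_1})$ would separate $t$ from $z$ in $[0,1]\times[0,1]$; then $c$, which joins $t$ and $z$, would be forced to meet $\pi$, contradicting $c\subseteq L_{k_6}$ and $\pi\subseteq[0,1]\times[0,1]\setminus L_{k_6}$ by Maehara\cite[Lemma~2]{Maehara}. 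The convenient way to make ``separate in $u$'' rigorous is to rotate the picture by $45^{\circ}$, whereupon the $\done$-barriers become vertically cutting curves and the crossing argument becomes verbatim that of Lemma \ref{lemma_tiles_v_and_h}. This confines $u_t$ and $u_z$ to within a bounded number of consecutive strips, giving $|u_t-u_z|\le\frac{3}{m(k_1-1)}$. Running the identical argument for $\tilde{\dtwo}(\A_{k_4})$ with $v=x+y$ gives $|v_t-v_z|\le\frac{3}{m(k_4-1)}$.

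It remains to translate the two diagonal bounds into the Cartesian ones. From $x=\tfrac{u+v}{2}$ and $y=\tfrac{v-u}{2}$ one has $|t_1-z_1|\le\tfrac12\left(|u_t-u_z|+|v_t-v_z|\right)$ and, likewise, $|t_2-z_2|\le\tfrac12\left(|u_t-u_z|+|v_t-v_z|\right)$. Since $k=\min(k_1,k_4)$ gives $m(k-1)=\min\bigl(m(k_1-1),m(k_4-1)\bigr)$, both $\frac{1}{m(k_1-1)}$ and $\frac{1}{m(k_4-1)}$ are at most $\frac{1}{m(k-1)}$, so each of $|t_1-z_1|$ and $|t_2-z_2|$ is at most $\tfrac12\bigl(\tfrac{3}{m(k-1)}+\tfrac{3}{m(k-1)}\bigr)=\frac{3}{m(k-1)}$, as claimed.

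The step I expect to be the main obstacle is the diagonal crossing estimate: one must check that a single curve of $\tilde{\done}(\A_{k_1})$ genuinely cuts the square into two pieces lying on opposite sides of the main diagonal, so that Maehara's lemma applies, and one must pin down how wide, in the coordinate $u=x-y$, a level-$(k_1-1)$ cell through which a barrier passes really is. It is precisely this width (a level-$(k_1-1)$ square spans a $u$-range of $\frac{2}{m(k_1-1)}$), combined with the $\frac{1}{m(k_1-1)}$ spacing of the barriers, that yields the constant $3$ here in place of the constant $2$ of the purely vertical/horizontal Lemma \ref{lemma_tiles_v_and_h}. A secondary, bookkeeping obstacle is guaranteeing that both barrier families can be realised at a level $\le k_6$, so that they avoid $L_{k_6}$; this is exactly the role played by the corner patterns $\A_{k_2},\A_{k_5}\in\cone$ and $\A_{k_3},\A_{k_6}\in\ctwo$ in Proposition \ref{prop_diag_paths}.
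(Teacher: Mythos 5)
You take a genuinely different route from the paper, and it contains a quantitative gap that the paper's route is built to avoid. The paper does not bound $u=x-y$ and $v=x+y$ simultaneously; it splits into two cases according to the relative orientation of $t$ and $z$. If $z$ lies (weakly) down-right or up-left of $t$, it uses \emph{only} the $\done$-barriers from $\A_{k_1}$: were there two full columns of level $k_1-1$ strictly between $t$ and $z$, the staircase-index gap $(i_z-i_t)+(j_t-j_z)$ would be at least $3$, so a separating staircase would lie strictly between them (contradiction via Maehara); hence at most one column, and symmetrically at most one row, lies strictly between, which gives \emph{both} coordinate bounds $3/m(k_1-1)$ from that single family. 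The opposite orientation is handled the same way by the $\dtwo$-barriers from $\A_{k_4}$, giving $3/m(k_4-1)$, and both bounds are at most $3/m(k-1)$. Your crossing estimate $|u_t-u_z|\le 3/m(k_1-1)$, by contrast, presumes that each barrier of $\tilde{\done}(\A_{k_1})$ is confined to a single diagonal of level-$(k_1-1)$ cells, i.e.\ spans a $u$-range of $2/m(k_1-1)$. That is not what the definition of $\done$ guarantees: in case 2 of that definition the cut inside a cell consists of a piece $K_1$ joining the left and top sides and a piece $K_2$ joining the bottom and right sides, and when these are concatenated across cells the barrier alternates between the cell-diagonals $i-j=c$ and $i-j=c-1$. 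It therefore occupies a staircase of \emph{two} adjacent cell-diagonals (and inside each cell the cut may wander arbitrarily), so its $u$-range can be as wide as $3/m(k_1-1)$, and the region between two consecutive barriers spans a $u$-range of up to $4/m(k_1-1)$. The separation argument thus only yields $|u_t-u_z|\le 4/m(k_1-1)$ (and likewise $|v_t-v_z|\le 4/m(k_4-1)$), and your averaging step then gives $|t_1-z_1|\le \tfrac12\bigl(4/m(k_1-1)+4/m(k_4-1)\bigr)\le 4/m(k-1)$, which misses the stated constant $3$.

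The structural reason the paper's count survives the two-diagonal width of the staircases is that, once the orientation is fixed, the column gap and the row gap both contribute to the staircase-index gap, and the within-cell positions of $t$ and $z$ are paid for once per coordinate; in your $(u,v)$-scheme that slack is paid twice (once in $u$, once in $v$) before averaging. Your approach is repairable: if $k_1\neq k_4$, the finer grid satisfies $m(\max(k_1,k_4)-1)\ge 2\,m(k-1)$, so averaging $4/m(k-1)$ against $4/\bigl(2m(k-1)\bigr)$ recovers $3/m(k-1)$; if $k_1=k_4$, one should combine the cell-index bounds $|(i-j)_t-(i-j)_z|\le 2$ and $|(i+j)_t-(i+j)_z|\le 2$ to get $|i_t-i_z|\le 2$ directly. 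But as written, the intermediate estimate with constant $3$ is false in general, so the proof does not establish the lemma.
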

\rem{\noindent} 
\begin{proof} As the first case, let either $t_1\le z_1$ and $t_2\ge z_2$ or $t_1\ge z_1$ and $t_2\le z_2$. If there are two columns between $t$ and $z$, then there is a diagonal block $B$ of level $k_1-1$ between $t$ and $z$. From the definition of $\done$ and $\tilde{\done}$ it follows that $B$ contains a curve $\pi \in \tilde{\v}(\A_{k_6})$, which leads from one edge of the unit square to another edge of the unit square. Therefore, $t$ and $z$ can not lie in the same connected component. The same holds if there are two rows between $t$ and $z$. Thus, we obtain, $|t_1 - z_1| \le \frac{3}{m(k_1 -1)}$ and $|t_2 - z_2| \le \frac{3}{m(k_1 -1)}$.

In the second case, where $t_1\ge z_1$ and $t_2\ge z_2$ or $t_1\le z_1$ and $t_2\le z_2$, we proceed analogously, and have $|t_1 - z_1| \le \frac{3}{m(k_4 -1)}$ and $|t_2 - z_2| \le \frac{3}{m(k_4 -1)}$.
\end{proof}
\rem{\noindent}
Before stating and proving the main result of this paper we remark that the construction of generalised Sierpi\'nski carpets, as it was given in Section \ref{sec:definitions_and_construction}, can be generalised, by
allowing, at each inductive step $k$ of the construction, not just the application of one pattern $\A_k \subset \S_{m_k}$ to all white squares that were created in the previous step, but, the application of a set of $n(k)$ distinct patterns $\{{\cal A}^i_k \}_{i=1}^{n(k)}$, $n(k)\ge 1$, $\A^i _k \subseteq \S_{m_k} $, with the possibility to apply  distinct patterns of $\{{\cal A}^i_k \}_{i=1}^{n(k)}$ to distinct white squares of $\W_{k-1}$. In this case we call $\linf$ a {\em non-uniform generalised Sierpi\'nski carpet}. Thus, a non-uniform generalised Sierpi\'nski carpet is defined by means of a sequence $\{\hat \A_k\}_{k=1}^{\infty}$, and its width sequence $\{m_k\}_{k=1}^{\infty}$,  where $\hat \A_k$ is a set of $n(k)$ (with $n(k)\ge 1$) $m_k$-patterns, for  all $k\ge 1$. 

The lemmas of this section also hold for non-uniform generalised Sierpi\'nski carpets, if we demand that for $i=1,\dots,6$, at step $k_i$ of the construction only one pattern is applied to all white squares of $\W_{k_{i}-1}$. In the same way, the results in Section \ref{sec:structure} can be extended to non-uniform generalised Sierpi\'nski carpets. 
\begin{theorem}
\label{theo_totally_disc}
Let $\linf$ be a generalised carpet defined by a sequence of patterns $\{\A_k\}_{k=1}^{\infty}$ with width-sequence $\{m_k\}_{k=1}^{\infty}$. If 
\begin{enumerate}
\item there exist two distinct types of patterns, $\t_1, \t_2 \in \{\v,\h , \done, \dtwo \}$
such that infinitely many patterns occurring in the sequence $\{\A_k\}_{k=1}^{\infty}$ are of type $\t_1$ and infinitely many patterns occurring in the sequence $\{\A_k\}_{k=1}^{\infty}$ are of type $\t_2$, and
\item infinitely many patterns occurring in the sequence $\{\A_k \}_{k=1}^{\infty}$ are of type $\cone$ and infinitely many patterns occurring in the sequence $\{\A_k \}_{k=1}^{\infty}$ are of type $\ctwo$, 
\end{enumerate}
then $\linf$ is totally disconnected with respect to the Euclidean topology.
\end{theorem}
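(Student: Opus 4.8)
The plan is to argue by contradiction, combining the three lemmas of this section with the elementary fact that the mesh $m(k-1)=\prod_{j=1}^{k-1}m_j\ge 2^{\,k-1}$ tends to infinity as $k\to\infty$. Suppose $\linf$ were not totally disconnected. Then some connected component of $\linf$ would contain two distinct points $t=(t_1,t_2)$ and $z=(z_1,z_2)$, so that $\delta:=\max(|t_1-z_1|,|t_2-z_2|)>0$. The goal is to contradict the existence of such a pair.

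First I would record the reduction that makes the lemmas applicable. Since the sets $L_n$ are nested and decreasing with $\linf=\bigcap_{n\ge 1}L_n$, we have $\linf\subseteq L_n$ for every $n$. A connected component of $\linf$ is a connected subset of $L_n$ and hence lies inside a single connected component of $L_n$. Consequently, for every index $k_6$ that we may later select, the points $t$ and $z$ lie in the same connected component of $L_{k_6}$ --- which is exactly the hypothesis needed to invoke Lemma~\ref{lemma_tiles_v_and_h}, Lemma~\ref{lemma_tiles_diag_and_h}, or Lemma~\ref{lemma_tiles_diag_and_diag}.

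Next I would split into cases according to the unordered pair $\{\t_1,\t_2\}$ of distinct types guaranteed by hypothesis~(1), and in each case exhibit indices meeting the hypotheses of one of the lemmas. Because each type in (1) and both corner types in (2) occurs infinitely often, every required index can be chosen beyond any prescribed threshold and in the order that the relevant lemma demands. Concretely, $\{\v,\h\}$ is covered by Lemma~\ref{lemma_tiles_v_and_h}; each of $\{\v,\done\}$, $\{\v,\dtwo\}$, $\{\h,\done\}$, $\{\h,\dtwo\}$ is covered by Lemma~\ref{lemma_tiles_diag_and_h}, placing the diagonal type at the index $k_1$ and the axis-parallel type at the index $k_4$; and $\{\done,\dtwo\}$ is covered by Lemma~\ref{lemma_tiles_diag_and_diag}. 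In every case the chosen lemma bounds \emph{both} $|t_1-z_1|$ and $|t_2-z_2|$ by a quantity of the form $\frac{c}{m(k-1)}$, where $c\in\{2,3,4\}$ and $k$ is the smallest index involved (for Lemma~\ref{lemma_tiles_v_and_h} one uses $m(k_4-1)\ge m(k_1-1)$ to subsume the second bound under $\frac{2}{m(k_1-1)}$).

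Finally, the contradiction is immediate: since $m(k-1)\to\infty$, one may take the governing index $k$ so large that $m(k-1)>c/\delta$, whence $|t_1-z_1|<\delta$ and $|t_2-z_2|<\delta$, contradicting the definition of $\delta$. Therefore no connected component of $\linf$ can contain two distinct points, i.e.\ $\linf$ is totally disconnected. I expect the only genuine obstacle to be the bookkeeping of the ordering constraints --- for instance $1<k_1<k_2,k_3$ together with $k_1<k_4<k_5,k_6$ in Lemma~\ref{lemma_tiles_v_and_h} --- but since the ``infinitely many'' clauses of (1) and (2) let us pick an occurrence of each needed type past any bound, these can always be satisfied simultaneously, and this is precisely where those clauses enter.
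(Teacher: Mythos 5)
Your proposal is correct and follows essentially the same route as the paper: the paper's proof is just a terse three-case split (\(\{\v,\h\}\), diagonal-plus-axis-parallel, \(\{\done,\dtwo\}\)) invoking Lemma~\ref{lemma_tiles_v_and_h}, Lemma~\ref{lemma_tiles_diag_and_h}, and Lemma~\ref{lemma_tiles_diag_and_diag} respectively, with the mesh-to-infinity argument left implicit. Your write-up simply makes explicit the details the paper suppresses (components of \(\linf\) lie in components of \(L_{k_6}\), the index bookkeeping, and \(m(k-1)\to\infty\)), so there is nothing substantively different to flag.
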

\begin{proof} As the first case, we assume that $\t_1=\v$ and $\t_2=\h$. Lemma~\ref{lemma_tiles_v_and_h} yields that any connected component of $\linf$ consists of exactly one point. The second case is that $\t_1\in\{\done, \dtwo \}$ and $\t_2\in\{\h, \v\}$. Here, we use Lemma~\ref{lemma_tiles_diag_and_h} to obtain that any connected component of $\linf$ consists of one point. In the third and final case, we have $\t_1=\done$ and $\t_2=\dtwo$. By Lemma~\ref{lemma_tiles_diag_and_diag} we infer that any connected component of $\linf$ consists of one point.
\end{proof}
\rem{\noindent}
Based on the above proof, one can show that Theorem~\ref{theo_totally_disc} also holds in the case of non-uniform generalised Sierpi\'nski carpets:


\begin{theorem}
\label{theo_totally_disc_nonuniform}
Let $\linf$ be a non-uniform generalised carpet defined by a sequence of sets of patterns $\{\hat \A_k\}_{k=1}^{\infty}$ with width-sequence $\{m_k\}_{k=1}^{\infty}$. If 
\begin{enumerate}
\item there exist two distinct types of patterns, $\t_1, \t_2 \in \{\v,\h , \done, \dtwo \}$
such that infinitely many elements $\hat \A_k$ occurring in the sequence $\{\hat \A_k\}_{k=1}^{\infty}$ consist of only one pattern $\hat \A_k=\{\A_k\}$ and $\A_k \in \t_1$, and infinitely many elements occurring in the sequence $\{\hat \A_k\}_{k=1}^{\infty}$ consist of only one pattern $\hat \A_k=\{\A_k \}$ and $\A_k \in \t_2$, and 
\item infinitely many elements of the sequence $\{\hat \A_k \}_{k=1}^{\infty}$ satisfy
\[\hat \A_k = \{\A^i_{k}\}_{i=1}^{n(k)}, ~~n(k)\ge 1, \A^i_{k} \in \cone,~i=1,\dots,n(k),
\]
and 
infinitely many elements of the sequence $\{\hat \A_k \}_{k=1}^{\infty}$ satisfy
\[\hat \A_k = \{\A^i_{k}\}_{i=1}^{n(k)}, ~~n(k)\ge 1, \A^i_{k} \in \ctwo,~i=1,\dots,n(k),
\]
\end{enumerate}
then $\linf$ is totally disconnected with respect to the Euclidean topology.
\end{theorem}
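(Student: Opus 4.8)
The plan is to follow the proof of Theorem~\ref{theo_totally_disc} essentially line by line, the only new ingredients being a careful choice of the six indices from the infinitely many admissible steps supplied by the hypotheses, together with the observation that the three lemmas of this section survive the passage to the non-uniform setting under precisely the hypotheses imposed here. As in the uniform case, it suffices to show that every connected component of $\linf$ is a single point: if $t=(t_1,t_2)$ and $z=(z_1,z_2)$ lie in a common connected component of $\linf$, then, since $\linf\subseteq L_{k_6}$, that component is a connected subset of $L_{k_6}$ and hence lies in a single connected component of $L_{k_6}$, so the coordinate bounds of the relevant lemma apply; letting the scale tend to infinity forces $t=z$.

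First I would record \emph{why} the lemmas extend, and in particular why conditions~1 and~2 are asymmetric. The role of a cutting pattern ($\v,\h,\done$ or $\dtwo$) at its step is to generate, via Propositions~\ref{prop_v_or_h_paths} and~\ref{prop_diag_paths} and the periodicity Corollary~\ref{cor_period_v_or_h}, a family of \emph{parallel} separating curves (Propositions~\ref{prop_paral_vert_curves} and~\ref{prop_paral_diag_curves}); these translation arguments genuinely require the \emph{same} pattern applied to every white square of the preceding level. This is exactly what condition~1 guarantees, since it asks for infinitely many steps at which $\hat\A_k$ is a \emph{single} pattern of type $\t_1$, and infinitely many at which it is a single pattern of type $\t_2$. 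By contrast, the corner patterns enter only locally: at a diagonal touchpoint $P$ of two black squares, applying a $\cone$ (respectively $\ctwo$) pattern forces a black square into the corner at $P$, turning the diagonal contact into a genuine side-adjacency. This thickening occurs no matter \emph{which} white squares meet at $P$, provided every pattern applied at that step is a corner pattern of the required type---which is precisely condition~2. Hence the three lemmas hold verbatim for the non-uniform carpet under conditions~1 and~2.

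Next I would perform the index selection. Since $m(k)=\prod_{i=1}^{k} m_i$ with every $m_i\ge 2$, we have $m(k)\to\infty$. Given any $\varepsilon>0$, I would use the four infinitude hypotheses to pick, respecting the orderings required by the pertinent lemma, a single-pattern $\t_1$-step $k_1$, an all-$\cone$ step $k_2$ and an all-$\ctwo$ step $k_3$ with $k_1<k_2,k_3$, and likewise a single-pattern $\t_2$-step $k_4$ together with corner steps $k_5,k_6$, all chosen so large that $m(k_1-1)$ and $m(k_4-1)$ (or $m(\min(k_1,k_4)-1)$ in the diagonal--diagonal case) exceed $4/\varepsilon$. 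The case analysis is then identical to that of Theorem~\ref{theo_totally_disc}: for $(\t_1,\t_2)=(\v,\h)$ apply Lemma~\ref{lemma_tiles_v_and_h}; for $\t_1\in\{\done,\dtwo\}$ and $\t_2\in\{\v,\h\}$ apply Lemma~\ref{lemma_tiles_diag_and_h}; and for $(\t_1,\t_2)=(\done,\dtwo)$ apply Lemma~\ref{lemma_tiles_diag_and_diag}. Each yields $|t_1-z_1|<\varepsilon$ and $|t_2-z_2|<\varepsilon$, and since $\varepsilon$ was arbitrary, $t=z$.

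The main obstacle is exactly this asymmetry between conditions~1 and~2: justifying that a \emph{single} pattern is needed at the cutting steps while merely \emph{all corner} patterns suffice at the corner steps. The remark preceding the theorem only asserts the lemmas when a single pattern is applied at each of the six steps, so the real work is to re-audit the proofs of Propositions~\ref{prop_v_or_h_paths}--\ref{prop_paral_diag_curves}, tracking every place where uniformity of the applied pattern is invoked. One must confirm that each use of uniformity at a corner step is of the purely local thickening kind described above---and therefore survives when the patterns there vary but all remain corner patterns of the correct type---whereas every genuine translation or periodicity argument is confined to the cutting steps, where condition~1 supplies a single pattern. Once this bookkeeping is in place, the remainder is the routine scale-to-infinity argument already carried out for Theorem~\ref{theo_totally_disc}.
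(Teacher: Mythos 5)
Your overall architecture (reduce to the three lemmas, choose the six indices from the infinitude hypotheses, let the scale tend to infinity) is exactly what the paper intends, and your treatment of condition~1 is correct: the translation and periodicity arguments do require a single pattern at the cutting steps. The genuine gap is your central claim about the corner steps, namely that ``applying a $\cone$ (respectively $\ctwo$) pattern forces a black square into the corner at $P$ \dots no matter which white squares meet at $P$, provided every pattern applied at that step is a corner pattern of the required type.'' This is false. Let $P$ be a point where two black squares touch only diagonally, say the upper-left and lower-right squares at $P$ are black; then the two white squares meeting at $P$ are $W_{ll}$ (lower left of $P$, whose subsquare at $P$ is its $S_{m-1,m-1}^{m}$-subsquare) and $W_{ur}$ (upper right of $P$, whose subsquare at $P$ is its $S_{0,0}^{m}$-subsquare). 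A pattern belongs to $\cone$ as soon as \emph{at least one} of $S_{0,0}^{m},S_{m-1,m-1}^{m}$ is black. So assign to $W_{ur}$ a $\cone$ pattern whose only black first-diagonal corner is $S_{m-1,m-1}^{m}$, and to $W_{ll}$ a $\cone$ pattern whose only black first-diagonal corner is $S_{0,0}^{m}$: both assignments are legal under condition~2, yet both subsquares at $P$ remain white, no black square is created at $P$, and the diagonal contact survives the step. The uniform argument in the proof of Proposition~\ref{prop_v_or_h_paths} does not suffer from this precisely because the \emph{same} pattern is applied to both squares: whichever first-diagonal corner of that one pattern is black, it lands at $P$ from one of the two sides. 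So the thickening at corner steps is not ``purely local''; it genuinely uses a coordinated (uniform) choice at the two squares meeting at $P$, and your audit reaches the wrong conclusion at exactly the step you yourself flagged as the main obstacle.

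Consequently the three lemmas do not extend under condition~2 as you assert, and the proof does not close. Note that the paper's own remark before Theorem~\ref{theo_totally_disc} claims the extension of the lemmas only when a \emph{single} pattern is applied at each of the six steps $k_1,\dots,k_6$ --- a hypothesis strictly stronger than condition~2 --- and the paper's proof of Theorem~\ref{theo_totally_disc_nonuniform} is a one-sentence reference back to that remark, so this is precisely the point where a careful proof must do new work rather than cite locality. Moreover, the failure appears essential rather than technical: an adversary may repeat the two-pattern assignment above at every all-$\cone$ and all-$\ctwo$ step, and use at the cutting steps single patterns of type $\t_1,\t_2$ whose four corner squares are all white, so that white diagonal contacts --- and with them chains of white squares crossing black cuts --- can persist at every level. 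This indicates that any proof along these lines needs condition~2 strengthened (for instance to $n(k)=1$ at those steps, matching the paper's remark), or else a genuinely different mechanism must be found; as written, your argument has a fatal gap.
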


\par \noindent {\bf Remark.} On the one hand, the results obtained here provide a method for constructing both self-similar and non-self-similar generalised carpets that are totally disconnected. On the other hand, the construction of the generalised Sierpi\'nski carpets described above makes it possible to obtain totally disconnected carpets of box-counting dimension less than or equal to $2$. 

For example, a generalised carpet $\linf$ of box-counting dimension $2$ is obtained if the sequence of patterns $\{\A_k\}_{k=1}^{\infty}$ with width-sequence $\{m_k\}_{k=1}^{\infty}$ that defines $\linf$ has the following properties:
\begin{enumerate} 
\item $m_n=n+1$, for all $n \ge 1$ (and thus $m(n)=(n+1)!$),
\item infinitely many patterns $\A_k$ are of type $\v$ and $\cone$, and the number of black squares in $\A_k$ is $k+1$,
\item the rest of infinitely many patterns $\A_k$ are of type $\done$ and $\ctwo$, and the number of black squares in $\A_k$ is $k+1$. 
\end{enumerate}
The above example is just one of various choices that are possible for the sequence of patterns $\{\A_k\}_{k=1}^{\infty}$ in order to obtain a totally disconnected generalised carpet with box-counting dimension $2$.
%


\begin{thebibliography}{10}
\bibitem{netsets} L.L. Cristea, \emph{On the connectedness of limit net sets}, J. Topol. Appl. {155} (2008), 1808--1819.
\bibitem{CristeaSteinsky_ConnectedGSC} L.L. Cristea, B. Steinsky, \emph{Connected generalised Sierpi\'nski carpets}, J. Topol. Appl. {157} (2010) 1157--1162.
\bibitem{Falconer1990}  K.J. Falconer, \emph{Fractal Geometry, Mathematical Foundations and Applications}, John Wiley \& Sons, Chichester, 1990.
\bibitem{the_pore_structure} A. Franz, C. Schulzky, S. Tarafdar, K.H. Hoffmann \emph{The pore structure of Sierpi\'nski carpets}, J. Phys. A: Math. Gen. 34 (2001), 8751--8765.
\bibitem{Hata-selfsimilar85} M. Hata, \emph{On the Structure of Self-Similar Sets}, Japan J. Appl. Math. {2} (1985), 381--414.
\bibitem{Kuratowski} K. Kuratowski, \emph{Topology}, Volume II, Academic Press, New York and London, 1968.
\bibitem{Maehara} R. Maehara, \emph{The Jordan Curve Theorem via the Brouwer Fixed Point Theorem}, Amer. Math. Monthly 89 (1984), 641--643.
\bibitem{Mandelbrot1983} B.B. Mandelbrot, \emph{The Fractal Geometry of Nature}, W.H. Freeman \& Co., San Francisco, 1983. 
\bibitem{Sierpinski} W. Sierpi\'nski, \emph{Sur une courbe cantorienne qui contient une image biunivoque et continue de toute courbe donn\'ee},  C.R. Acad. Sci. Paris, \textbf{162}, 629--632 (1916).
\bibitem{modelling_porous_structures} S. Tarafdar, A. Franz, C. Schulzky, K.H. Hoffmann, \emph{Modelling porous structures by repeated Sierpinski carpets}, Physica A 292 (2001), 1--8.
\end{thebibliography}
\end{document}